\theoremstyle{plain}
\newtheorem{theorem}{Theorem}[section]
\newtheorem{lemma}{Lemma}[section]
\newtheorem{remark}[theorem]{Remark}
\numberwithin{equation}{section}
\numberwithin{lemma}{section}
\title{Extended sampling method for inverse elastic scattering problems using one incident wave}
\author{J. Liu
\thanks{Department of Mathematical Sciences, Jinan University, Guangzhou, 130012, China ({\tt liujuan@jnu.edu.cn}).}
\and X. Liu
\thanks{NCMIS and Academy of Mathematics and Systems Science,
Chinese Academy of Sciences, Beijing 100190, China. ({\tt xdliu@amt.ac.cn}).}
\and J. Sun
\thanks{Department of Mathematical Sciences, Michigan Technological University, Houghton, MI 49931, U.S.A. ({\tt jiguangs@mtu.edu}).}
}
\date{}
\begin{document}
\maketitle
\begin{abstract}
We consider the inverse elastic scattering problems using the far field data due to one incident plane wave. A simple method is proposed to reconstruct the location and size of the obstacle using different components of the far field pattern. The method sets up linear ill-posed integral equations for sampling points in the domain of interrogation and uses the (approximate) solutions to compute indicators. Using the far field patterns of rigid disks as the kernels of the integral equations and moving the measured data to the right hand side, the method has the ability to process limited aperture data. Numerical examples show that the method can effectively determine the location and approximate the support of the obstacle with little a priori information.
\end{abstract}
\section{Introduction}
Motivated by applications in non-destructive testings, medical imaging and seismic exploration,
the inverse elastic scattering problems have received significant attention recently
\cite{HahnerHsiao1993IP, Arens2001, BonnetConstantinescu2005IP, AmmariCalmonIakovleva2008SIAMIS, HuLiLiuSun2014SIAMIS, LiWangWangZhao2016IP, JiLiuXi2018, BaoHuSunYin2018JMPA}. There exist two main groups of methods to recover the location and shape of an
obstacle using the scattering data.
The first group are the iterative methods to minimize some cost functions
\cite{BaoHuSunYin2018JMPA, LiWangWangZhao2016IP}.
These methods usually need to solve the forward scattering problems.
The second group are the non-iterative or direct methods, e.g., the linear sampling method \cite{ColtonKirsch1996IP, Arens2001}, the factorization method \cite{AlvesKress2002,Charalambopoulos2007,HuKirschSini2013}, the reciprocity gap method \cite{ColtonHaddar2005IP, MonkSun2007IPI, DiCristoSun2006IP},
the range test method
\cite{PotthastSylvestrKusiak2003IP}, the reverse time migration \cite{ChenHuang2015SCM} and the direct sampling method \cite{ItoJinZou2012IP, JiLiuXi2018}.
These methods do not solve any forward scattering problem and thus are fast in general.

In this paper, we consider the inverse elastic scattering problems using the far field pattern due to one incident plane wave.
A simple method, called extended sampling method (ESM) \cite{LiuSun2018}, is proposed to reconstruct the location and size of the obstacle.
The method is based on an idea similar to that of the linear sampling method,
which has been studied extensively in the literature \cite{ColtonKirsch1996IP, ColtonKress2013}.

The linear sampling method uses the full aperture far field pattern, i.e., the far field pattern of all incident and observation directions.
It sets up linear ill-posed integral equations for the sampling points in the domain of interrogation and uses the (approximate)
solutions to reconstruct the location and shape of the unknown obstacle.
The kernel of the integral equations is the (measured) full aperture far field pattern.
ESM also sets up integral equations for the sampling points in the domain of interrogation
and uses the (approximate) solutions to compute some indicators.
However, the kernels of these integral equations for ESM are the full aperture far field patterns of rigid disks, which can be computed easily in advance.
The measured far field pattern is moved to the right hand side of the equation.
This arrangement gives ESM the ability to process the far field pattern due to a single incident plane wave.

The rest of the paper is organized as follows.
In Section  \ref{DSP}, the direct elastic scattering problem is presented.
In Section \ref{section2}, we first derive a relation between the scattered field of the elastic scattering problems and the Sommerfeld radiation solutions of Helmholtz equations. This relation is used to transform the inverse problem with the compressional part or shear part of the far field pattern into an inverse acoustic scattering problem. Then a reconstruction algorithm is proposed based on ESM. In Section \ref{section3}, using the far field patterns of elastic waves for rigid disks and the related translation property, a new far field equation is proposed. The behavior of the solutions is analyzed. An ESM algorithm for the inverse problem using the new equation is presented. In Section \ref{section4}, numerical examples are provided.

\section{Preliminaries and the direct scattering problem}\label{DSP}
We begin with the notations used throughout this paper. Vectors are written in bold to distinguish from scalars. For a vector ${\boldsymbol x}=(x_1; x_2):=(x_1, x_2)^{T}\in \mathbb{R}^2$, let $\hat{{\boldsymbol x}}:={\boldsymbol x}/|{\boldsymbol x}|$ and $\hat{\boldsymbol x}^{\perp}$ be obtained by rotating ${\hat{\boldsymbol x}}$ $\pi/2$ anticlockwise. Denote $\partial/\partial x_i, i=1,2$ by $\partial_i$ for simplicity. In addition to the usual differential operators $\textrm{grad}{ u}:=({\partial_{1} u}; {\partial_{2} u})$ and $\textrm{div}{\boldsymbol u}:={\partial_{1} u_1}+{\partial_{2} u_2}$, we will make use of $\textrm{grad}^{\perp}{u}:=(-{\partial_{2} u}; {\partial_{1} u})$ and $\textrm{div}^{\perp}{\boldsymbol u}:={\partial_{1} u_2}-{\partial_{2} u_1}$.

The propagation of time-harmonic waves in an isotropic homogeneous medium with Lame constants $\lambda$, $\mu$ ($\mu>0$, $2\mu+\lambda>0$) and density $\rho$ is governed by the Navier equation
\begin{equation}\label{navier}
\Delta^*  {\boldsymbol u}+\rho \omega^2  {\boldsymbol u}={\boldsymbol 0},
\end{equation}
where ${\boldsymbol u}$ denotes the displacement field and $\omega$ denotes the circular frequency. The differential operator $\Delta^*{\boldsymbol u}:=\mu \Delta {\boldsymbol u}+(\lambda+\mu) \textrm{ grad div}\, {\boldsymbol u}$. In this paper, we assume $\rho\equiv 1$ for simplicity.

The solution ${\boldsymbol u}$ of \eqref{navier} can be decomposed as
\begin{equation*}
 {\boldsymbol u}= {\boldsymbol u}_p+ {\boldsymbol u}_s,
\end{equation*}
where
\begin{equation}\label{divide}
 {\boldsymbol u}_p:=-\frac{1}{k_p^2}\textrm{ grad div } {\boldsymbol u},\quad  {\boldsymbol u}_s:=-\frac{1}{k_s^2}\textrm{ grad}^\perp \textrm{div}^\perp {\boldsymbol u}
\end{equation}
are known as the compressional part of ${\boldsymbol u}$ associated with the wave number $k_p:={\omega}/{\sqrt{2\mu+\lambda}}$, and the shear part of $ {\boldsymbol u}$ associated with the wave number $k_s:={\omega}/{\sqrt{\mu}}$.

{\it The direct elastic scattering problem} for an obstacle is as follows. Given a bounded domain $D$ of class $C^2$ and an incident field ${\boldsymbol u}^{\textrm{inc}}$ such that ${\boldsymbol u}^{\textrm{inc}}|_{\partial D}\in [C(\partial D)]^2$ and ${\boldsymbol u}^{\textrm{inc}}$ is a solution of \eqref{navier} in a neighborhood of $\partial D$, find the scattered field ${\boldsymbol u}\in [C^2(\mathbb{R}^2\setminus \overline{D})\bigcap C^1(\mathbb{R}^2\setminus D)]^2$ such that
\begin{equation}\label{obstacle}
  \left\{
   \begin{array}{lll}
   &\Delta^* {\boldsymbol u}+\omega^2 {\boldsymbol u}={\boldsymbol 0},\ \ \ \textrm{in} \ \mathbb{R}^2\setminus \overline{D},\\
   &\lim\limits_{{r}\rightarrow\infty}\sqrt{r}(\partial {\boldsymbol u}_p/\partial r-{i} k_p {\boldsymbol u}_p)={\boldsymbol 0},\ \ \ r=| {\boldsymbol x}|,\\
   &\lim\limits_{{r}\rightarrow\infty}\sqrt{r}(\partial {\boldsymbol u}_s/\partial r-{i} k_s {\boldsymbol u}_s)={\boldsymbol 0},\ \ \ r=| {\boldsymbol x}|.
   \end{array}\right.
\end{equation}
It is well known that every radiation solution to the Navier equation has an asymptotic behavior of the form
\begin{equation}\label{farfield}
 {\boldsymbol u}({\boldsymbol x})=\frac{e^{ik_p|{\boldsymbol x}|}}{\sqrt{|{\boldsymbol x}|}}{u}_p^\infty({\hat{\boldsymbol x}}){\hat{\boldsymbol x}}+\frac{e^{ik_s|{\boldsymbol x}|}}{\sqrt{|{\boldsymbol x}|}}{u}_s^\infty({\hat{\boldsymbol x}}){\hat{\boldsymbol x}}^\perp+O\big(|{\boldsymbol x}|^{-3/2}\big), \ \ \ | {\boldsymbol x}|\rightarrow \infty,
\end{equation}
uniformly in all directions ${\hat{\boldsymbol x}}$, where ${u}_p^\infty$ and ${u}_s^\infty$ are analytic functions on the unit circle 
\[
\mathbb{S}:=\{{\hat{\boldsymbol x}} | \hat{\boldsymbol x} \in \mathbb{R}^2, |\hat{\boldsymbol x}|=1\}.
\] 
Throughout the paper, ${\boldsymbol u}_\infty(\hat{\boldsymbol x}):=({u}_{p}^\infty(\hat{\boldsymbol x});{u}_s^\infty(\hat{\boldsymbol x}))$ is defined as the far field pattern of ${\boldsymbol u}$, and ${u}_p^\infty$ and ${u}_s^\infty$ are defined as compressional part and shear part of the far-field pattern, respectively.
For the well-posedness of the above direct scattering problem, one
needs to impose suitable conditions on $\partial D$, which depend on the physical properties of the scatterer. The scattered field $ {\boldsymbol u}$ satisfies
\begin{itemize}
\item[1)] the Dirichlet boundary condition
\begin{equation*}\label{dirichlet}
 {\boldsymbol u}=- {\boldsymbol u}^{\textrm{inc}} \ \ \ \textrm{on}\  \partial D,
\end{equation*}
for a rigid body;
\item[2)] the Neumann boundary condition
\begin{equation*}\label{neumann}
 {T_{\boldsymbol \nu}} {\boldsymbol u}=- {T_{\boldsymbol \nu}} {\boldsymbol u}^{\textrm{inc}} \ \ \ \textrm{on}\  \partial D,
\end{equation*}
for a cavity, where $ {T_{\boldsymbol \nu}}:=2\mu \frac{\partial }{\partial {\boldsymbol \nu}}+\lambda{\boldsymbol \nu} \textrm{ div }-\mu {\boldsymbol \nu}^{\perp}\textrm{div}^{\perp}$ denotes the surface traction operator and ${\boldsymbol \nu}$ is the unit outward normal to $\partial D$;
\item[3)] the impedance boundary condition
\begin{equation*}\label{impedance}
 {T_{\boldsymbol \nu}} {\boldsymbol u} +{i}\sigma  {\boldsymbol u}=- {T_{\boldsymbol \nu}} {\boldsymbol u}^{\textrm{inc}} -{i}\sigma  {\boldsymbol u}^{\textrm{inc}}\ \ \ \textrm{on}\  \partial D,
\end{equation*}
with some real-valued parameter $\sigma\geq 0$.
\end{itemize}

{\it The inverse scattering problem} of interests is, using the far-field pattern of all observation directions due to one incident wave, to reconstruct the location and approximate the support of the scatterer without knowing the physical properties of the scatterer. More specifically, the following two inverse elastic scattering problems will be considered:
\begin{itemize}

 \item[{\bf IP-P:}] {\em Determine the location and size of the scatterer $D$
 from the knowledge of compressional part ${u}_p^\infty(\hat{\boldsymbol x}), \hat{\boldsymbol x}\in \mathbb{S}$ or shear part ${u}_s^\infty(\hat{\boldsymbol x}), \hat{\boldsymbol x}\in \mathbb{S}$ of the far field pattern due to one incident wave.}

\item[{\bf IP-F:}] {\em Determine the location and size of the scatterer  $D$
 from the knowledge of the far field pattern ${\boldsymbol u}_\infty(\hat{\boldsymbol x})=({u}_{p}^\infty(\hat{\boldsymbol x});{u}_s^\infty(\hat{\boldsymbol x})), \hat{\boldsymbol x}\in \mathbb{S}$ due to one incident wave.}
\end{itemize}

\section{Extended sampling method for IP-P}\label{section2}
By building a relation between the scattered solution of elastic scattering problems and the radiating solutions of Helmholtz equations, we can extend ESM for inverse acoustic scattering problems to solve {\bf IP-P}.
\subsection{Radiating solutions of Helmholtz equations}
Recall that ${\boldsymbol u}$ is the scattered field of \eqref{obstacle} and ${\boldsymbol u}_\infty(\hat{\boldsymbol x}):=({u}_{p}^\infty(\hat{\boldsymbol x});u_s^\infty(\hat{\boldsymbol x}))$ is the corresponding far field pattern. The following theorem shows that ${\boldsymbol u}$ and ${\boldsymbol u}_\infty$ are related with the radiating solutions and their far field patterns of some Helmholtz equations, respectively.\\

\begin{theorem}\label{theorem1}
Let ${\boldsymbol u}$ be a solution of \eqref{obstacle}. Then $\phi:=-\frac{1}{k_p^2}{\rm div}\,{\boldsymbol u}$ and $\psi:=-\frac{1}{k_s^2}{\rm div}^\perp \,{\boldsymbol u}$ are the radiating solutions of
\begin{equation}\label{obstacle2}
  \left\{
   \begin{array}{lll}
   &\Delta  {\phi}+k^2_p  {\phi}=0,\ \ \ \textrm{in} \ \mathbb{R}^2\setminus \overline{D},\\
   &\Delta  {\psi}+k^2_s  {\psi}=0,\ \ \ \textrm{in} \ \mathbb{R}^2\setminus \overline{D},\\
   &\lim\limits_{{r}\rightarrow\infty}\sqrt{r}(\partial  \phi/\partial r-{i} k_p  \phi)=0,\ \ \ \ r=| {\boldsymbol x}|,\\
   &\lim\limits_{{r}\rightarrow\infty}\sqrt{r}(\partial  \psi/\partial r-{i} k_s  \psi)=0,\ \ \ r=| {\boldsymbol x}|.
   \end{array}\right.
\end{equation}
Furthermore, for the far field patterns $\phi_\infty$ and $\psi_\infty$ of $\phi$ and $\psi$, respectively,
\begin{equation}\label{farrelation_h_e}
\phi_\infty(\hat{\boldsymbol x})=\frac{1}{ik_p}{u}_p^\infty(\hat{\boldsymbol x}),\ \ \ \ \psi_\infty(\hat{\boldsymbol x})=\frac{1}{ik_s}{u}_s^\infty(\hat{\boldsymbol x}).
\end{equation}
\end{theorem}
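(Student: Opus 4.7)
The plan is to split the theorem into two pieces: establish the Helmholtz PDEs and radiation conditions for $\phi$ and $\psi$, then derive the far field identities from the asymptotic expansion \eqref{farfield} of ${\boldsymbol u}$.

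For the PDEs, I would apply $\mathrm{div}$ and $\mathrm{div}^\perp$ to the Navier equation \eqref{navier} (which is satisfied by ${\boldsymbol u}$ in $\mathbb{R}^2\setminus\overline{D}$). Using the identities $\mathrm{div}(\Delta{\boldsymbol u})=\Delta(\mathrm{div}\,{\boldsymbol u})$ and $\mathrm{div}(\mathrm{grad\,div}\,{\boldsymbol u})=\Delta(\mathrm{div}\,{\boldsymbol u})$, applying $\mathrm{div}$ to $\Delta^{*}{\boldsymbol u}+\omega^{2}{\boldsymbol u}={\boldsymbol 0}$ gives $(2\mu+\lambda)\Delta(\mathrm{div}\,{\boldsymbol u})+\omega^{2}\mathrm{div}\,{\boldsymbol u}=0$, which is exactly $\Delta\phi+k_p^{2}\phi=0$ after multiplying by $-1/(k_p^{2}(2\mu+\lambda))\cdot(2\mu+\lambda)$. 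For $\psi$, the key observation is that $\mathrm{div}^{\perp}(\mathrm{grad\,div}\,{\boldsymbol u})=\partial_{1}\partial_{2}(\mathrm{div}\,{\boldsymbol u})-\partial_{2}\partial_{1}(\mathrm{div}\,{\boldsymbol u})=0$, so the $(\lambda+\mu)$ term drops out, leaving $\mu\Delta(\mathrm{div}^{\perp}{\boldsymbol u})+\omega^{2}\mathrm{div}^{\perp}{\boldsymbol u}=0$, i.e. $\Delta\psi+k_s^{2}\psi=0$.

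For the radiation conditions and the far field identities, I would substitute the asymptotic expansion \eqref{farfield} into $\phi=-\frac{1}{k_p^{2}}\mathrm{div}\,{\boldsymbol u}$ and $\psi=-\frac{1}{k_s^{2}}\mathrm{div}^{\perp}{\boldsymbol u}$ and differentiate term by term. The crucial point is that the compressional term carries the radial unit vector $\hat{\boldsymbol x}$ while the shear term carries $\hat{\boldsymbol x}^{\perp}$; under $\mathrm{div}$, only the $\hat{\boldsymbol x}$-part survives at leading order because differentiating $e^{ik_p r}/\sqrt{r}$ in the radial direction produces the dominant factor $ik_p$, while the $\hat{\boldsymbol x}^{\perp}$-part is tangential and yields only lower-order $O(|{\boldsymbol x}|^{-3/2})$ contributions. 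This gives
\begin{equation*}
\phi({\boldsymbol x})=\frac{e^{ik_p|{\boldsymbol x}|}}{\sqrt{|{\boldsymbol x}|}}\,\frac{1}{ik_p}u_p^{\infty}(\hat{\boldsymbol x})+O\bigl(|{\boldsymbol x}|^{-3/2}\bigr),
\end{equation*}
and an analogous computation (with $\mathrm{div}^{\perp}$ picking out only the shear part) gives the corresponding expansion for $\psi$ with $k_s$ and $u_s^{\infty}$. Both the Sommerfeld radiation conditions and the formulas \eqref{farrelation_h_e} then follow immediately by reading off the leading coefficients.

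The main obstacle is the careful bookkeeping of the asymptotic expansions: one must verify that differentiating the remainder $O(|{\boldsymbol x}|^{-3/2})$ in \eqref{farfield} still yields a remainder compatible with the Sommerfeld condition, and that the angular derivatives of $u_p^{\infty}(\hat{\boldsymbol x})$ and $u_s^{\infty}(\hat{\boldsymbol x})$ contribute only at the $|{\boldsymbol x}|^{-3/2}$ level. This is standard for radiating solutions of Navier's equation (the full expansion \eqref{farfield} is known to be differentiable term by term with appropriate error bounds), so I would invoke that regularity rather than derive it from scratch. The algebraic structure — $\mathrm{div}$ annihilating the shear piece and $\mathrm{div}^{\perp}$ annihilating the compressional piece to leading order — is what makes the separation in \eqref{farrelation_h_e} clean.
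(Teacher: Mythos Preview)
Your proposal is correct and follows essentially the same approach as the paper: verify the Helmholtz equations by applying $\mathrm{div}$ and $\mathrm{div}^{\perp}$ to the Navier equation, then read off the radiation conditions and far field relations by differentiating the asymptotic expansion \eqref{farfield}. The paper carries out the second step via an explicit polar-coordinate computation of $\mathrm{div}\,{\boldsymbol u}$ and $\mathrm{div}^{\perp}{\boldsymbol u}$ (writing $\partial_{x_1},\partial_{x_2}$ in terms of $\partial_r,\partial_\theta$ and tracking which terms survive at order $r^{-1/2}$), which is exactly the calculation you describe conceptually; your remark about the differentiability of the remainder is a point the paper simply takes for granted.
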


\begin{proof}
It can be easily verified that $\phi=-\frac{1}{k_p^2}{\textrm{div}}\ {\boldsymbol u}$ and $\psi=-\frac{1}{k_s^2}{\textrm{div}}^\perp {\boldsymbol u}$ satisfy the first and second Helmholtz equations of \eqref{obstacle2}, respectively.

From \eqref{farfield}, the radiating solution of \eqref{obstacle} in polar coordinates has an asymptotic behavior
\begin{eqnarray*}
{\boldsymbol u}(r,\theta)&=&{\left( \begin{array}{c}
u_1(r,\theta) \\
u_2(r,\theta)
\end{array}
\right )}\\
&=&\frac{e^{ik_pr}}{\sqrt{r}}u_p^\infty(\theta){\left( \begin{array}{c}
\cos\theta \\
\sin\theta
\end{array}
\right )}+\frac{e^{ik_sr}}{\sqrt{r}}u_s^\infty(\theta){\left( \begin{array}{c}
-\sin\theta \\
\cos\theta
\end{array}
\right )}+O(r^{-3/2}).
\end{eqnarray*}
Consequently, we have that
\begin{equation*}
\frac{\partial}{\partial r}u_1(r,\theta)=ik_p\frac{e^{ik_pr}}{\sqrt{r}}u_p^\infty(\theta)\cos\theta-ik_s\frac{e^{ik_sr}}{\sqrt{r}}u_s^\infty(\theta)\sin\theta+O(r^{-3/2}),
\end{equation*}
\begin{equation*}
\frac{\partial}{\partial r}u_2(r,\theta)=ik_p\frac{e^{ik_pr}}{\sqrt{r}}u_p^\infty(\theta)\sin\theta+ik_s\frac{e^{ik_sr}}{\sqrt{r}}u_s^\infty(\theta)\cos\theta+O(r^{-3/2}).
\end{equation*}
Using these two equations, we obtain
\begin{eqnarray*}
\textrm{div}\ {\boldsymbol u}&=&\frac{\partial u_1}{\partial x_1}+\frac{\partial u_2}{\partial x_2}\\
&=&\bigg(\frac{\partial u_1}{\partial r}\cos\theta-\frac{1}{r}\frac{\partial u_1}{\partial \theta}\sin\theta\bigg)+\bigg(\frac{\partial u_2}{\partial r}\sin\theta+\frac{1}{r}\frac{\partial u_2}{\partial \theta}\cos\theta\bigg)\\
&=&\bigg(\frac{\partial u_1}{\partial r}\cos\theta+\frac{\partial u_2}{\partial r}\sin\theta\bigg)+\bigg(\frac{1}{r}\frac{\partial u_2}{\partial \theta}\cos\theta-\frac{1}{r}\frac{\partial u_1}{\partial \theta}\sin\theta\bigg)\\
&=&ik_p\frac{e^{ik_pr}}{\sqrt{r}}u_p^\infty(\theta)+O(r^{-3/2})
\end{eqnarray*}
and
\begin{eqnarray*}
\textrm{div}^\perp\ {\boldsymbol u}&=&\frac{\partial u_2}{\partial x_1}-\frac{\partial u_1}{\partial x_2}\\
&=&\bigg(\frac{\partial u_2}{\partial r}\cos\theta-\frac{1}{r}\frac{\partial u_2}{\partial \theta}\sin\theta\bigg)-\bigg(\frac{\partial u_1}{\partial r}\sin\theta+\frac{1}{r}\frac{\partial u_1}{\partial \theta}\cos\theta\bigg)\\
&=&\bigg(\frac{\partial u_2}{\partial r}\cos\theta-\frac{\partial u_1}{\partial r}\sin\theta\bigg)-\bigg(\frac{1}{r}\frac{\partial u_1}{\partial \theta}\cos\theta+\frac{1}{r}\frac{\partial u_2}{\partial \theta}\sin\theta\bigg)\\
&=&ik_s\frac{e^{ik_sr}}{\sqrt{r}}u_s^\infty(\theta)+O(r^{-3/2}).
\end{eqnarray*}
Thus
\begin{equation}\label{phi_asymptitic}
\phi({\boldsymbol x})=-\frac{1}{k_p^2}{\textrm{div}}\ {\boldsymbol u}=\frac{1}{ik_p}\frac{e^{ik_p|{\boldsymbol x}|}}{\sqrt{|{\boldsymbol x}|}}u_p^\infty(\hat{\boldsymbol x})+O(|{\boldsymbol x}|^{-3/2}),
\end{equation}
\begin{equation}\label{psi_asymptitic}
\psi({\boldsymbol x})=-\frac{1}{k_s^2}{\textrm{div}}^\perp {\boldsymbol u}=\frac{1}{ik_s}\frac{e^{ik_s|{\boldsymbol x}|}}{\sqrt{|x|}}u_s^\infty(\hat{\boldsymbol x})+O(|{\boldsymbol x}|^{-3/2}),
\end{equation}
which imply that $\phi$ and $\psi$ satisfy the Sommerfeld radiation conditions in \eqref{obstacle2}.

Since  $\phi=-\frac{1}{k_p^2}{\textrm{div}}\ {\boldsymbol u}$ and $\psi=-\frac{1}{k_s^2}{\textrm{div}}^\perp {\boldsymbol u}$ are radiating solutions of the Helmholtz equations, the far field patterns $\phi_\infty(\hat{\boldsymbol x})$ and $\psi_\infty(\hat{\boldsymbol x})$ have the following asymptotic expansions (see \cite{ColtonKress2013})
\begin{equation*}
\phi({\boldsymbol x})=\frac{e^{ik_p|{\boldsymbol x}|}}{\sqrt{|{\boldsymbol x}|}}\bigg\{\phi_\infty(\hat{\boldsymbol x})+O\bigg(\frac{1}{|{\boldsymbol x}|}\bigg)\bigg\},\ \ \ |{\boldsymbol x}|\rightarrow \infty,
\end{equation*}
\begin{equation*}
\psi({\boldsymbol x})=\frac{e^{ik_s|{\boldsymbol x}|}}{\sqrt{|{\boldsymbol x}|}}\bigg\{\psi_\infty(\hat{\boldsymbol x})+O\bigg(\frac{1}{|{\boldsymbol x}|}\bigg)\bigg\},\ \ \ |{\boldsymbol x}|\rightarrow \infty.
\end{equation*}
Then from \eqref{phi_asymptitic} and \eqref{psi_asymptitic}, \eqref{farrelation_h_e} is proved.
\end{proof}

\begin{remark}\label{remark1}
If the obstacle $D$ is a rigid body, the solution of the elastic scattering problem \eqref{obstacle} satisfies the Dirichlet boundary condition ${\boldsymbol u}=-{\boldsymbol u}^{\rm{inc}}$. From \eqref{divide}, $\phi=-\frac{1}{k_p^2}{\rm div}\ {\boldsymbol u}$ and $\psi=-\frac{1}{k_s^2}{\rm div}^\perp {\boldsymbol u}$ satisfy
\begin{equation}
{\boldsymbol u}={\rm grad}\ \phi+{\rm grad}^\perp \psi.
\end{equation}
Denote by ${\boldsymbol \tau}=(\tau_1;\tau_2)$ the unit tangent vector and by ${\boldsymbol \nu}=(\nu_1;\nu_2)=(\tau_2; -\tau_1)$ the unit outward normal vector on $\partial D$. By straightforward calculation, $\phi$ and $\psi$ satisfy the coupled boundary conditions
\begin{equation*}
 \frac{\partial \phi}{\partial {\boldsymbol \nu}}+\frac{\partial\psi}{\partial {\boldsymbol \tau}}=-{\boldsymbol \nu}\cdot {\boldsymbol u}^{\rm{inc}},\ \ \
 \frac{\partial\phi}{\partial {\boldsymbol \tau}}-\frac{\partial\psi}{\partial {\boldsymbol \nu}}=-{\boldsymbol \tau}\cdot {\boldsymbol u}^{\rm{inc}},\ \ \ \textrm{on}\ \partial D.
\end{equation*}
From \autoref{theorem1}, $(\phi, \psi)$ satisfies the following Helmholtz equations with coupled boundary conditions
\begin{equation}\label{rigidbody1}
  \left\{
   \begin{array}{lll}
   &\Delta  {\phi}+k^2_p  {\phi}=0,\ \ \ \textrm{in} \ \mathbb{R}^2\setminus \overline{D},\\
   &\Delta  {\psi}+k^2_s  {\psi}=0,\ \ \ \textrm{in} \ \mathbb{R}^2\setminus \overline{D},\\
   &\frac{\partial {\boldsymbol \phi}}{\partial {\boldsymbol \nu}}+\frac{\partial\psi}{\partial\tau}=-{\boldsymbol \nu}\cdot {\boldsymbol u}^{\textrm{inc}},\ \ \ \textrm{on}\ \partial D,\\
   &\frac{\partial {\boldsymbol \phi}}{\partial {\boldsymbol \tau}}-\frac{\partial\psi}{\partial {\boldsymbol \nu}}=-{\boldsymbol \tau}\cdot {\boldsymbol u}^{\textrm{inc}},\ \ \ \textrm{on}\ \partial D,\\
   &\lim\limits_{{r}\rightarrow\infty}\sqrt{r}(\partial  \phi/\partial r-{i} k_p  \phi)=0,\ \ \ \ r=| {\boldsymbol x}|,\\
   &\lim\limits_{{r}\rightarrow\infty}\sqrt{r}(\partial  \psi/\partial r-{i} k_s  \psi)=0,\ \ \ r=| {\boldsymbol x}|,
   \end{array}\right.
\end{equation}
which has a unique solution (see \cite{WangQiu2017}). On the other hand, from \eqref{divide}, if $(\phi, \psi)$ solves \eqref{rigidbody1}, direct computation shows that ${\boldsymbol u}={\rm grad} \ \phi+{\rm grad}^\perp \psi$ satisfies the elastic scattering problem \eqref{obstacle} with Dirichlet boundary condition.
\end{remark}

\subsection{Extended sampling method} \label{2.2}

Denote by ${\boldsymbol U}({\boldsymbol x}), {\boldsymbol x}\in \mathbb{R}^2\setminus \overline{D}$ and ${\boldsymbol U}_\infty(\hat{\boldsymbol x}):=(U_p^\infty;U_s^\infty), \hat{\boldsymbol x}\in \mathbb{S}$ the scattered field and far field pattern of the unknown scatterer $D$ due to an incident wave, respectively. The inverse problem {\bf IP-P} is to determine the location and approximate support of the scatterer from the knowledge of the compressional part $U_p^\infty(\hat{\boldsymbol x})$ or the shear part $U_s^\infty(\hat{\boldsymbol x})$. In this subsection, for simplicity we use $t$ to represent either $p$ or $s$.

 Let $B_{\boldsymbol z}$ be a sound soft disk centered at ${\boldsymbol z}$ with radius $R$ and let $u^{B_{\boldsymbol z}}({\boldsymbol x};k_t,{\boldsymbol d})$, ${\boldsymbol x}\in \mathbb{R}^2\setminus \overline{B_{\boldsymbol z}}$ solve
\begin{equation*}
  \left\{
   \begin{array}{lll}
   &\Delta u+k_t^2 u=0,\ \ \ \textrm{in}\  \mathbb{R}^2\setminus \overline{B_{\boldsymbol z}}, \\
   &u=-{e}^{{i}k_t{\boldsymbol x}\cdot {\boldsymbol d}},\ \ \ \textrm{on}\ \partial B_{\boldsymbol z}\\
   &\lim\limits_{{r}\rightarrow\infty}\sqrt{r}(\partial u/\partial r-{i}k_tu)=0,\ \ \ r=|{\boldsymbol x}|,
   \end{array}\right.
\end{equation*}
where ${\boldsymbol d}\in \mathbb{S}$ is the incident direction. Denote $u_{\infty}^{B_{\boldsymbol z}}(\hat{\boldsymbol x};k_t,{\boldsymbol d})$ the far field pattern of $u^{B_{\boldsymbol z}}({\boldsymbol x};k_t,{\boldsymbol d})$ (for its series expansion, see e.g., \cite{ColtonKress2013, LiuSun2018}).
Then for {\bf IP-P}, we introduce the far field equation
\begin{equation}\label{fe}
\int_{\mathbb{S}}u^{B_{\boldsymbol z}}_{\infty}(\hat{\boldsymbol x};k_t,{\boldsymbol d})g_{\boldsymbol z}({\boldsymbol d})d s({\boldsymbol d})=\frac{1}{ik_t}U_t^\infty(\hat{\boldsymbol x}), \quad \hat{\boldsymbol x} \in \mathbb{S}.
\end{equation}

The following theorem is the main result for \eqref{fe}.\\

\begin{theorem}\label{theorem2}
Let $B_{\boldsymbol z}$ be a sound soft disk centered at ${\boldsymbol z}$ with radius $R$. Let $D$ be an obstacle. Assume that $k_tR$ does not coincide with any zero of the Bessel functions $J_n, n=0,1,2,\cdots$. Then the following results hold for the far field equation (\ref{fe}):
\begin{itemize}
\item[1.] If $D\subset B_{\boldsymbol z}$, for a given $\varepsilon>0$, there exists a function $g_{\boldsymbol z}^\varepsilon\in L^2(\mathbb{S})$ such that
\begin{equation}\label{fe2}
\bigg\|\int_{\mathbb{S}}u^{B_{\boldsymbol z}}_{\infty}(\hat{\boldsymbol x}; k_t, {\boldsymbol d})g_{\boldsymbol z}^\varepsilon({\boldsymbol d}) d s({\boldsymbol d})-\frac{1}{ik_t}U_t^\infty(\hat{\boldsymbol x})\bigg\|_{L^2(\mathbb{S})}<\varepsilon
\end{equation}
and the Herglotz wave function 
\[
v_{g_{\boldsymbol z}^\varepsilon}({\boldsymbol x}):=\int_{\partial B_{\boldsymbol z}}e^{ik_t {\boldsymbol x}\cdot {\boldsymbol d}}g_{\boldsymbol z}^\varepsilon ({\boldsymbol d})ds({\boldsymbol d}), {\boldsymbol x}\in B_{\boldsymbol z}
\] 
converges to the solution $w\in H^1(B_{\boldsymbol z})$ of the Helmholtz equation with $w=-\phi_t$ on $\partial B_{\boldsymbol z}$ as $\varepsilon\rightarrow 0$.

\item[2.] If $D\cap B_{\boldsymbol z}=\emptyset$, every $g_{\boldsymbol z}^\varepsilon\in L^2(\mathbb{S})$ that satisfies (\ref{fe2}) for a given $\varepsilon>0$ is such that
\begin{equation*}
\lim_{\varepsilon\rightarrow 0}\|v_{g_{\boldsymbol z}^\varepsilon}\|_{H^1(B_{\boldsymbol z})}=\infty.
\end{equation*}

\end{itemize}
\end{theorem}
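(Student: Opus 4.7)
The plan is to view \eqref{fe} as the ill-posed linear equation $F_z g = \tfrac{1}{ik_t}U_t^\infty$, where
$$F_z g(\hat{\boldsymbol x}) := \int_\mathbb{S} u^{B_{\boldsymbol z}}_\infty(\hat{\boldsymbol x}; k_t, \boldsymbol d)\, g(\boldsymbol d)\, ds(\boldsymbol d)$$
is the far-field operator of the sound-soft disk $B_{\boldsymbol z}$: by linearity, $F_z g$ is the far-field pattern of the unique radiating Helmholtz solution $u_g^s$ outside $\overline{B_{\boldsymbol z}}$ satisfying $u_g^s = -v_g$ on $\partial B_{\boldsymbol z}$, where $v_g$ is the Herglotz wave associated with $g$. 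By \autoref{theorem1}, the right-hand side $\tfrac{1}{ik_t}U_t^\infty$ is itself the far-field pattern of a radiating Helmholtz solution $\phi_t$ (the compressional potential $\phi$ if $t=p$, the shear potential $\psi$ if $t=s$) defined on $\mathbb{R}^2\setminus\overline{D}$. The hypothesis that $k_tR$ is not a zero of any $J_n$ is equivalent to $k_t^2$ not being a Dirichlet eigenvalue of $-\Delta$ on $B_{\boldsymbol z}$, which simultaneously grants well-posedness of the interior Dirichlet problem on $B_{\boldsymbol z}$ and density of Herglotz wave traces on $\partial B_{\boldsymbol z}$.

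For Case 1, because $D\subset B_{\boldsymbol z}$, the function $\phi_t$ is already a radiating Helmholtz solution on the larger domain $\mathbb{R}^2\setminus\overline{B_{\boldsymbol z}}$, so its trace on $\partial B_{\boldsymbol z}$ is well defined. Non-resonance yields a unique $w\in H^1(B_{\boldsymbol z})$ solving $\Delta w+k_t^2 w=0$ in $B_{\boldsymbol z}$ with $w=-\phi_t$ on $\partial B_{\boldsymbol z}$. I would then invoke the classical density of Herglotz wave functions in the $H^1$-closure of Helmholtz solutions on $B_{\boldsymbol z}$ to produce $g_{\boldsymbol z}^\varepsilon\in L^2(\mathbb{S})$ with $v_{g_{\boldsymbol z}^\varepsilon}\to w$ in $H^1(B_{\boldsymbol z})$. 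By the trace theorem, $-v_{g_{\boldsymbol z}^\varepsilon}|_{\partial B_{\boldsymbol z}}\to\phi_t|_{\partial B_{\boldsymbol z}}$ in $H^{1/2}(\partial B_{\boldsymbol z})$, so by well-posedness of the exterior Dirichlet problem for $B_{\boldsymbol z}$ and continuity of the boundary-to-far-field map we obtain $F_z g_{\boldsymbol z}^\varepsilon\to\phi_t^\infty=\tfrac{1}{ik_t}U_t^\infty$ in $L^2(\mathbb{S})$, giving \eqref{fe2} along with the convergence statement for $v_{g_{\boldsymbol z}^\varepsilon}$.

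For Case 2, I would argue by contradiction. Suppose, along some sequence $\varepsilon_n\to 0$, that $\|v_{g_{\boldsymbol z}^{\varepsilon_n}}\|_{H^1(B_{\boldsymbol z})}$ remains bounded. Extract a weak limit $v^*\in H^1(B_{\boldsymbol z})$, which is a Helmholtz solution in $B_{\boldsymbol z}$. Pass to the boundary and use continuity of the data-to-far-field map on the exterior of $B_{\boldsymbol z}$ to produce a radiating Helmholtz solution $u^*$ outside $\overline{B_{\boldsymbol z}}$ with $u^*=-v^*$ on $\partial B_{\boldsymbol z}$ and far field $\tfrac{1}{ik_t}U_t^\infty$. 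Applying Rellich's lemma to $u^*-\phi_t$ forces $u^*=\phi_t$ on the connected unbounded component of $\mathbb{R}^2\setminus(\overline{B_{\boldsymbol z}}\cup\overline{D})$. Since $D\cap B_{\boldsymbol z}=\emptyset$, the field $u^*$ is regular on all of $\mathbb{R}^2\setminus\overline{B_{\boldsymbol z}}$, so this identification would analytically continue $\phi_t$ across $\partial D$ into $D$ and exhibit a radiating Helmholtz solution on $\mathbb{R}^2\setminus\overline{B_{\boldsymbol z}}$ coinciding with $\phi_t$ outside $\overline{D}$. Such a global extension of the scatterer-generated potential $\phi_t$ is ruled out by a standard analytic-continuation/uniqueness argument paralleling the acoustic ESM analysis in \cite{LiuSun2018}, producing the desired blow-up.

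The hard part I expect is exactly this final step of Case 2: formalizing, under the sole hypothesis $D\cap B_{\boldsymbol z}=\emptyset$, why the scatterer-generated $\phi_t$ cannot admit a radiating Helmholtz extension to $\mathbb{R}^2\setminus\overline{B_{\boldsymbol z}}$. The remaining ingredients — non-resonance for the disk, density of Herglotz waves, and well-posedness of the interior and exterior Dirichlet problems — are routine, and \autoref{theorem1} is precisely what allows these scalar acoustic tools to be reused for the elastic inverse problem \textbf{IP-P}.
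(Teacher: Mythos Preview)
Your proposal is correct and follows the same approach as the paper: invoke \autoref{theorem1} to recognize $\tfrac{1}{ik_t}U_t^\infty$ as the far field of the radiating Helmholtz solution $\phi_t$ on $\mathbb{R}^2\setminus\overline{D}$, then apply the acoustic ESM argument. The paper's proof simply cites Theorem~3.1 of \cite{LiuSun2018} at that point, whereas you have additionally sketched its contents (Herglotz density for Case~1, Rellich plus analytic continuation for Case~2).
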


\begin{proof}
Denote the right hand side of the far field equation \eqref{fe} by  $\phi_t^\infty(\hat{\boldsymbol x}):=\frac{1}{ik_t}U_t^\infty(\hat{\boldsymbol x})$. From \autoref{theorem1}, we know that $\phi_t^\infty(\hat{\boldsymbol x})$ is the far field pattern of
\begin{equation*}
  \phi_t({\boldsymbol x}):=\left\{
   \begin{array}{ll}
   &-\frac{1}{k_p^2}{\textrm{div}}\ {\boldsymbol U}, \ \ \ t=p,\\
   &-\frac{1}{k_s^2}{\textrm{div}}^\perp {\boldsymbol U},\ \ \ t=s,
   \end{array}\right.
\end{equation*}
which is a radiating solution of the Helmholtz equation outside $D$. The rest of proof is exactly the same as the that of Theorem 3.1 in \cite{LiuSun2018}.
\end{proof}

Now we are ready to present the extended sampling method (ESM) for {\bf IP-P}. Let $\Omega$ be a domain containing $D$. For a sampling point ${\boldsymbol z} \in \Omega$, we consider the linear ill-posed integral equation \eqref{fe}.
By \autoref{theorem2}, one expects that $\|g^\alpha_{\boldsymbol z}\|_{L^2(\mathbb{S})}$ is relatively large when $D$ is outside $B_{\boldsymbol z}$
and relatively small when $D$ is inside $B_{\boldsymbol z}$.
Consequently, a reconstruction of the location and support of $D$ can be obtained by plotting $\|g^\alpha_{\boldsymbol z}\|_{L^2(\mathbb{S})}$ for
all sampling points ${\boldsymbol z} \in \Omega$.\\

\begin{itemize}
\item[ ] \hspace{-1cm}{\bf{The Extended Sampling Method for IP-P}}{\em
\item[1.] Generate a set $T$ of sampling points for $\Omega$ which contains $D$.
\item[2.] For each sampling point ${\boldsymbol z} \in T$,
	\begin{itemize}
	\item[a.] compute $u^{B_{\boldsymbol z}}_{\infty}(\hat{\boldsymbol x}; k_t,{\boldsymbol d})$ and set up a discrete version of \eqref{fe};
	\item[b.] use the Tikhonov regularization to compute an approximate solution $g^\alpha_{\boldsymbol z}$ of \eqref{fe}.
	\end{itemize}
\item[3.] Find the global minimum point ${\boldsymbol z}^*\in T$ for $\|g^\alpha_{\boldsymbol z}\|_{L^2(\mathbb{S})}$.
\item[4.] Choose $B_{{\boldsymbol z}^*}$ to be the reconstruction for $D$.}\\
\end{itemize}

As in \cite{LiuSun2018}, one can use a multilevel technique to find a suitable radius $R$ of the sampling disks $B_{\boldsymbol z}, {\boldsymbol z}\in T$.\\

\begin{itemize}
\item[ ] \hspace{-1cm}{\bf{The Multilevel ESM for IP-P}}{\em
\item[1.] Choose the sampling disks with a large radius $R_0$. Generate a proper set $T_0$ of sampling points.
Using ESM, determine the global minimum point ${\boldsymbol z}_0\in T_0$ for  $\|g^\alpha_{\boldsymbol z}\|_{L^2(\mathbb{S})}$ and an approximation $D_0$ for $D$.

\item[2.] For $j=1,2,\cdots$
	\begin{itemize}
	\item Let $R_j=R_{j-1}/2$ and generate a proper set $T_j$ of sampling points.
	\item Find the minimum point ${\boldsymbol z}_j\in T_j$  for  $\|g^\alpha_{\boldsymbol z}\|_{L^2(\mathbb{S})}$ and an approximation $D_j$ for $D$.
		 If ${\boldsymbol z}_{j} \not\in D_{j-1}$, go to Step 3.
	\end{itemize}

\item[3.] Choose ${\boldsymbol z}_{j-1}$ and $D_{j-1}$ to be the location and approximate support of $D$, respectively.}
\end{itemize}

\section{Extended sampling method for IP-F}\label{section3}
In this section, a novel far field equation is introduced.
The series expansion for the far field patterns of rigid disks, which serve as the kernels of the integrals, will be studied in detail.
Then an extended sampling method for {\bf IP-F} is proposed.
\subsection{Far field pattern for rigid disks}\label{appendixA}
Let $B\subset \mathbb{R}^2$ be a rigid disk centered at the origin with radius $R$. Let ${\boldsymbol u}^B({\boldsymbol x})$ and ${\boldsymbol u}^B_\infty=(u_p^{\infty,{\boldsymbol 0}};u_s^{\infty,{\boldsymbol 0}})$ be the radiating solution and far field pattern of the elastic scattering problem \eqref{obstacle} for $B$ due to an incident wave ${\boldsymbol u}^{\textrm{inc}}({\boldsymbol x})$, respectively. From Remark \ref{remark1}, we know that ${\boldsymbol u}^B({\boldsymbol x})=\textrm{grad}\ \phi+\textrm{grad}^\perp \psi$, where $(\phi, \psi)$ is the unique solution of
\begin{equation*}\label{rigidbody}
  \left\{
   \begin{array}{lll}
   &\Delta  {\phi}+k^2_p  {\phi}=0,\ \ \ \textrm{in} \ \mathbb{R}^2\setminus \overline{B},\\
   &\Delta  {\psi}+k^2_s  {\psi}=0,\ \ \ \textrm{in} \ \mathbb{R}^2\setminus \overline{B},\\
   &\frac{\partial \phi}{\partial{\boldsymbol \nu}}+\frac{\partial\psi}{\partial{\boldsymbol \tau}}=-{\boldsymbol \nu}\cdot {\boldsymbol u}^{\textrm{inc}},\ \ \ \textrm{on}\ \partial B,\\
   &\frac{\partial\phi}{\partial{\boldsymbol \tau}}-\frac{\partial\psi}{\partial{\boldsymbol \nu}}=-{\boldsymbol \tau}\cdot {\boldsymbol u}^{\textrm{inc}},\ \ \ \textrm{on}\ \partial {B},\\
   &\lim\limits_{{r}\rightarrow\infty}\sqrt{r}(\partial  \phi/\partial r-{i} k_p  \phi)=0,\ \ \ \ r=| {\boldsymbol x}|,\\
   &\lim\limits_{{r}\rightarrow\infty}\sqrt{r}(\partial  \psi/\partial r-{i} k_s  \psi)=0,\ \ \ r=| {\boldsymbol x}|.
   \end{array}\right.
\end{equation*}
Denote by $H_n^{(1)}(\cdot)$ the Hankel function of the first kind of order $n$. In polar coordinates ${\boldsymbol x}=(r\cos\theta, r\sin \theta)$,
$\phi$ and $\psi$ can be written as (see \cite{ColtonKress2013})
\begin{equation}\label{phiseries}
\phi(r,\theta)=\sum_{n=-\infty}^{\infty}a_n H_{|n|}^{(1)}(k_p r)e^{in\theta},\ \ \ r\geq R,
\end{equation}
\begin{equation}\label{psiseries}
\psi(r,\theta)=\sum_{n=-\infty}^{\infty}b_n H_{|n|}^{(1)}(k_s r)e^{in\theta},\ \ \ r\geq R.
\end{equation}
Since $\phi$ and $\psi$ satisfy boundary conditions $\frac{\partial {\boldsymbol \phi}}{\partial {\boldsymbol \nu}}+\frac{\partial\psi}{\partial {\boldsymbol \tau}}=-{\boldsymbol \nu}\cdot {\boldsymbol u}^{\textrm{inc}}$ and $\frac{\partial{\boldsymbol \phi}}{\partial {\boldsymbol \tau}}-\frac{\partial\psi}{\partial {\boldsymbol \nu}}=-{\boldsymbol \tau}\cdot {\boldsymbol u}^{\textrm{inc}}$ on $\partial B$, using the fact that $\frac{\partial }{\partial {\boldsymbol \nu}}=\frac{\partial }{\partial r}$ and $\frac{\partial}{\partial{\boldsymbol \tau}}=\frac{1}{R}\frac{\partial}{\partial\theta}$, we have
\begin{equation*}\label{series1}
\sum_{n=-\infty}^{\infty} k_p a_n H_{|n|}^{(1)'}(k_p R)e^{in\theta}+\frac{i}{R}\sum_{n=-\infty}^{\infty} nb_n H_{|n|}^{(1)}(k_s R)e^{i n \theta}=-{\boldsymbol \nu}(\theta)\cdot {\boldsymbol u}^{\textrm{inc}}(R,\theta),
\end{equation*}
\begin{equation*}\label{series2}
\frac{i}{R}\sum_{n=-\infty}^{\infty} n a_n H_{|n|}^{(1)}(k_p R)e^{in\theta}-\sum_{n=-\infty}^{\infty} k_sb_n H_{|n|}^{(1)'}(k_s R)e^{i n \theta}=-{\boldsymbol \tau}(\theta)\cdot {\boldsymbol u}^{\textrm{inc}}(R,\theta).
\end{equation*}
Multiplying these two equations by $e^{-in\theta}, n=-\infty,\cdots, \infty$, and integrating with respect to $\theta$, we obtain a linear system for the coefficients $a_n$ and $b_n$,
\begin{equation*}
 k_p H_{|n|}^{(1)'}(k_p R) a_n+\frac{in}{R}H_{|n|}^{(1)}(k_s R)b_n=\frac{1}{2\pi}\int_0^{2\pi} -\big({\boldsymbol \nu}(\theta)\cdot {\boldsymbol u}^{\textrm{inc}}(R,\theta)\big)e^{-in\theta}d\theta,
 \end{equation*}
 \begin{equation*}
 \frac{in}{R}H_{|n|}^{(1)}(k_p R) a_n-k_s H_{|n|}^{(1)'}(k_s R)b_n=\frac{1}{2\pi}\int_0^{2\pi} -\big({\boldsymbol \tau}(\theta)\cdot {\boldsymbol u}^{\textrm{inc}}(R,\theta)\big)e^{-in\theta}d\theta.
\end{equation*}
Solve $a_n$ and $b_n$ to obtain
\begin{eqnarray}\label{an}
 a_n&=&\frac{k_s H_{|n|}^{(1)'}(k_s R)}{2\pi C} \int_0^{2\pi} \big({\boldsymbol \nu}(\theta)\cdot {\boldsymbol u}^{\textrm{inc}}(R,\theta)\big)e^{-in\theta}d\theta\nonumber\\
 &&+ \frac{in H_{|n|}^{(1)}(k_s R)}{2\pi R C}\int_0^{2\pi} \big({\boldsymbol \tau}(\theta)\cdot {\boldsymbol u}^{\textrm{inc}}(R,\theta)\big)e^{-in\theta}d\theta,
 \end{eqnarray}
 \begin{eqnarray}\label{bn}
  b_n&=&\frac{in H_{|n|}^{(1)}(k_p R)}{2\pi RC} \int_0^{2\pi} \big({\boldsymbol \nu}(\theta)\cdot {\boldsymbol u}^{\textrm{inc}}(R,\theta)\big)e^{-in\theta}d\theta\nonumber\\
  &&- \frac{k_p H_{|n|}^{(1)'}(k_p R)}{2\pi C}\int_0^{2\pi} \big({\boldsymbol \tau}(\theta)\cdot {\boldsymbol u}^{\textrm{inc}}(R,\theta)\big)e^{-in\theta}d\theta,
\end{eqnarray}
where
\begin{equation*}\label{D}
C:=\frac{n^2}{R^2}H_{|n|}^{(1)}(k_p R)H_{|n|}^{(1)}(k_s R)-k_pk_s H_{|n|}^{(1)'}(k_p R)  H_{|n|}^{(1)'}(k_s R).
\end{equation*}
From \autoref{theorem1}, we know that $u_p^{\infty,{\boldsymbol 0}}(\theta)=ik_p \phi_\infty$ and $u_s^{\infty,{\boldsymbol 0}}(\theta)=ik_s \psi_\infty$. Based on the asymptotic behavior of the Hankel functions, \eqref{phiseries} and \eqref{psiseries}, the compressional part and shear part of the far field pattern ${\boldsymbol u}^B_\infty=(u_p^{\infty,{\boldsymbol 0}};u_s^{\infty,{\boldsymbol 0}})$ are
\begin{equation}\label{Farfield-Bp}
u_p^{\infty,{\boldsymbol 0}}(\theta)=ik_p \phi_\infty=ik_p e^{-i\frac{\pi}{4}}\sqrt{\frac{2}{\pi k_p}}\sum_{n=-\infty}^{\infty}a_n i^{-|n|}e^{in\theta},
\end{equation}
\begin{equation}\label{Farfield-Bs}
u_s^{\infty,{\boldsymbol 0}}(\theta)=ik_s \psi_\infty=ik_p e^{-i\frac{\pi}{4}}\sqrt{\frac{2}{\pi k_s}}\sum_{n=-\infty}^{\infty}b_n i^{-|n|}e^{in\theta},
\end{equation}
where $a_n$ and $b_n$ are given by \eqref{an} and \eqref{bn}, respectively.


The  plane incident wave ${\boldsymbol u}^{\textrm{inc}}$ can be written in the form
\begin{equation}\label{planeincident}
 {\boldsymbol u}^{\textrm{inc}}({x};d,a_p,a_s)=a_p {\boldsymbol d} e^{ik_p {\boldsymbol x}\cdot {\boldsymbol d}}+a_s {\boldsymbol d}^\perp e^{ik_s {\boldsymbol x}\cdot {\boldsymbol d}},\ \ \ a_p, a_s\in \mathbb{C}.
\end{equation}
Let $B_{\boldsymbol z} := \{{{\boldsymbol x}+{\boldsymbol z}}; {\boldsymbol x} \in B\}$ be a rigid disk centered at ${\boldsymbol z}\in\,\mathbb{R}^2$ with radius $R$. Then $B_{\boldsymbol z}$ is a translation of $B$ with respect to ${\boldsymbol z}$.
Denote the corresponding far field patterns for $B$ and $B_{\boldsymbol z}$ by
\begin{equation*}
{{\boldsymbol u}_\infty^B}(\hat{\boldsymbol x}; {\boldsymbol d},a_p,a_s):=(u_p^{\infty,{\boldsymbol 0}}(\hat{\boldsymbol x}); u_s^{\infty,{\boldsymbol 0}}(\hat{\boldsymbol x}))\ \ \textrm{and}\ \ {{\boldsymbol u}_{\infty}^{B_{\boldsymbol z}}}(\hat{\boldsymbol x}; {\boldsymbol d},a_p,a_s):=(u_p^{\infty,{\boldsymbol z}}(\hat{\boldsymbol x}); u_s^{\infty,{\boldsymbol z}}(\hat{\boldsymbol x})),
\end{equation*}
respectively. Then the  far field pattern for $B_{\boldsymbol z}$ can be computed using the following translation relations (see (2.13)-(2.16) in \cite{JiLiu}):
\begin{equation}\label{T1}
{\boldsymbol u}_\infty^{B_{\boldsymbol z}}(\hat{\boldsymbol x}; {\boldsymbol d},1,0)=T_1\ {{\boldsymbol u}_\infty^{B}}(\hat{\boldsymbol x}; {\boldsymbol d},1,0),
\end{equation}
with $T_1:={diag}(e^{i(k_p{\boldsymbol d}-k_p\hat{\boldsymbol x})\cdot {\boldsymbol z}} , e^{i(k_p{\boldsymbol d}-k_s\hat{\boldsymbol x})\cdot {\boldsymbol z}} )$, and
\begin{equation}\label{T2}
{\boldsymbol u}_\infty^{B_{\boldsymbol z}}(\hat{\boldsymbol x}; {\boldsymbol d},0,1)=T_2\ {{\boldsymbol u}_\infty^{B}}(\hat{\boldsymbol x}; {\boldsymbol d},0,1),
\end{equation}
with $T_2:={diag}(e^{i(k_s{\boldsymbol d}-k_p\hat{\boldsymbol x})\cdot {\boldsymbol z}} , e^{i(k_s{\boldsymbol d}-k_s\hat{\boldsymbol x})\cdot {\boldsymbol z}} )$.

%

\subsection{Extended sampling method}
Let $\mathbb{L}^2:=[L^2(\mathbb{S})]^2$. For any ${\boldsymbol g}:=(g_p;g_s), {\boldsymbol h}:=(h_p;h_s)\in \mathbb{L}^2$, we define an inner product for the Hilbert space $\mathbb{L}^2$ (see \cite{Arens2001}):
  \begin{equation}\label{innerproduct}
  \langle {\boldsymbol g}, {\boldsymbol h} \rangle:= \frac{\omega}{k_p} \int_\mathbb{S} g_p(\hat{\boldsymbol x})\overline{h_p(\hat{\boldsymbol x})} ds(\hat{\boldsymbol x})+\frac{\omega}{k_s}\int_\mathbb{S} g_s(\hat{\boldsymbol x})\overline{h_s(\hat{\boldsymbol x})} ds(\hat{\boldsymbol x}),\ \ \ {\boldsymbol g}, {\boldsymbol h}\in \mathbb{L}^2.
  \end{equation}
Given ${\boldsymbol g} \in \mathbb{L}^2$, the elastic Herglotz wave function with density ${\boldsymbol g}$ is defined as
\begin{equation}\label{herglotz}
v_{\boldsymbol g}({\boldsymbol x}):=\int_{\mathbb{S}} \bigg\{ \sqrt{\frac{k_p}{w}}{\boldsymbol d} e^{ik_p {\boldsymbol x}\cdot {\boldsymbol d}} g_p({\boldsymbol d})+ \sqrt{\frac{k_s}{w}}{\boldsymbol d}^\perp e^{ik_s {\boldsymbol x}\cdot d}g_s({\boldsymbol d})\bigg\} ds({\boldsymbol d}),\ \ \ {\boldsymbol x}\in \mathbb{R}^2.
\end{equation}

Denote by ${\boldsymbol U}({\boldsymbol x}), {\boldsymbol x}\in \mathbb{R}^2\setminus \overline{D}$ and ${\boldsymbol U}_\infty(\hat{\boldsymbol x}):=(U_p^\infty;U_s^\infty), \hat{\boldsymbol x}\in \mathbb{S}$ the scattered wave and far field pattern of the unknown scatterer $D$ due to an incident wave, respectively. The inverse problem {\bf IP-F} is to determine the location and size of the scatterer $D$ from the far field pattern ${\boldsymbol U}_\infty(\hat{\boldsymbol x})$.

Consider the far field equation
\begin{equation}\label{fareq}
(F_{\boldsymbol z}^*F_{\boldsymbol z})^{1/4}{\boldsymbol g}={\boldsymbol U}_\infty,
\end{equation}
where ${\boldsymbol g}\in \mathbb{L}^2$. The far field operator $F_{\boldsymbol z}: \mathbb{L}^2\rightarrow \mathbb{L}^2$ is defined as
\begin{eqnarray}\label{faroperator}
F_{\boldsymbol z}{\boldsymbol g}(\hat{\boldsymbol x})&:=&\int_\mathbb{S}{\boldsymbol u}_\infty^{B_{\boldsymbol z}}\big(\hat{\boldsymbol x},{\boldsymbol d},\sqrt{k_p/\omega}g_p({\boldsymbol d}),\sqrt{k_s/\omega}g_s({\boldsymbol d})\big)ds({\boldsymbol d})\nonumber\\
&=&\int_\mathbb{S}\bigg\{\sqrt{\frac{k_p}{\omega}}{\boldsymbol u}_\infty^{B_{\boldsymbol z}} (\hat{\boldsymbol x};{\boldsymbol d},1,0)g_p({\boldsymbol d})+\sqrt{\frac{k_s}{\omega}}{\boldsymbol u}_\infty^{B_{\boldsymbol z}}(\hat{\boldsymbol x};{\boldsymbol d},0,1)g_s({\boldsymbol d})\bigg\}ds({\boldsymbol d})\\
&=&\int_\mathbb{S}{\left( \begin{array}{cc}
\sqrt{\frac{k_p}{\omega}}u_p^{\infty,{\boldsymbol z}}(\hat{\boldsymbol x};{\boldsymbol d},1,0) & \sqrt{\frac{k_s}{\omega}}u_p^{\infty,{\boldsymbol z}}(\hat{\boldsymbol x};{\boldsymbol d},0,1)\nonumber\\
\sqrt{\frac{k_p}{\omega}}u_s^{\infty,{\boldsymbol z}}(\hat{\boldsymbol x};{\boldsymbol d},1,0) & \sqrt{\frac{k_s}{\omega}}u_s^{\infty,{\boldsymbol z}}(\hat{\boldsymbol x};{\boldsymbol d},0,1)
\end{array}
\right )} {\left( \begin{array}{c}
g_p({\boldsymbol d})\\
g_s({\boldsymbol d})
\end{array}
\right )}ds({\boldsymbol d}),
\end{eqnarray}
where ${\boldsymbol u}^{B_{\boldsymbol z}}_\infty(\hat{\boldsymbol x}; {\boldsymbol d},a_p,a_s):=(u_p^{\infty,{\boldsymbol z}};u_s^{\infty,{\boldsymbol z}})$ is the far field pattern of the rigid disk $B_{\boldsymbol z}$ due to the plane incident wave \eqref{planeincident}. As introduced in \autoref{appendixA}, ${\boldsymbol u}_\infty^{B_{\boldsymbol z}} (\hat{\boldsymbol x};{\boldsymbol d},1,0)$ and ${\boldsymbol u}_\infty^{B_{\boldsymbol z}} (\hat{\boldsymbol x};{\boldsymbol d},0,1)$ can be easily computed by using the series expansion and the translation properties \eqref{T1}-\eqref{T2}. Define the operator $G: [H^{1/2}(\partial B_z)]^2\rightarrow \mathbb{L}^2$ by
\begin{equation}\label{Gdef}
G{\boldsymbol h}:={\boldsymbol u}_\infty,
\end{equation}
where ${\boldsymbol u}_\infty$ is the far-field pattern of the solution to the elastic Dirichlet boundary value problem with boundary value ${\boldsymbol h}$. The uniqueness for the boundary value problem implies that $G$ is injective. The far field operator $F_{\boldsymbol z}$ has the following properties (see \cite{Arens2001}).\\

\begin{lemma}\label{lemma1}
Assume that $\omega^2$ is not a Dirichlet eigenvalue of $-\Delta^*$ in $B_{\boldsymbol z}$. The ranges of $G$ and $(F_{\boldsymbol z}^*F_{\boldsymbol z})^{1/4}$ coincide.
\end{lemma}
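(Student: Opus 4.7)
The plan is Kirsch's factorization strategy, adapted to the elastic setting as in Arens \cite{Arens2001}: produce a factorization of $F_{\boldsymbol z}$ of the form $-G\,\Lambda\,G^{*}$ for a boundary integral operator $\Lambda$ satisfying the hypotheses of the Kirsch--Grinberg range identity, and then read off equality of ranges.

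First I would unwind $F_{\boldsymbol z}$ as a composition. By the matching weights in \eqref{faroperator} and \eqref{herglotz}, the elastic Herglotz wave $v_{\boldsymbol g}$ is precisely the incident field obtained by superposing the plane waves appearing in the integrand of $F_{\boldsymbol z}$. Linearity of the scattering map, together with the definition \eqref{Gdef} of $G$, then gives
\begin{equation*}
F_{\boldsymbol z}{\boldsymbol g} \;=\; G\bigl(-\,v_{\boldsymbol g}|_{\partial B_{\boldsymbol z}}\bigr),\qquad\text{i.e.}\qquad F_{\boldsymbol z} \;=\; -\,G\,H,
\end{equation*}
where $H\colon \mathbb{L}^{2}\to [H^{1/2}(\partial B_{\boldsymbol z})]^{2}$ is the Herglotz trace operator $H{\boldsymbol g}:=v_{\boldsymbol g}|_{\partial B_{\boldsymbol z}}$.

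Next I would refine this to a symmetric factorization. Representing the scattered field as an elastic single-layer potential with density in $[H^{-1/2}(\partial B_{\boldsymbol z})]^{2}$ yields $G=\widetilde G\,\Lambda^{-1}$, where $\Lambda$ is the elastic single-layer operator restricted to $\partial B_{\boldsymbol z}$ and $\widetilde G$ maps layer densities to their far field patterns. A direct reciprocity computation with the inner product \eqref{innerproduct} identifies $\widetilde G^{*}=H$, so that $F_{\boldsymbol z}=-\,G\,\Lambda^{*}\,G^{*}$. I would then verify the hypotheses of the range identity (cf.\ the acoustic version in \cite{ColtonKress2013}): (i) $G$ is compact and injective (injectivity is immediate from Rellich's lemma, since $G{\boldsymbol h}={\boldsymbol 0}$ forces the exterior Dirichlet solution to vanish, whence the trace ${\boldsymbol h}$ vanishes as well); (ii) $\Lambda=\Lambda_{0}+K$ with $\Lambda_{0}$ self-adjoint and coercive and $K$ compact; (iii) $\Lambda$ is injective on $[H^{-1/2}(\partial B_{\boldsymbol z})]^{2}$, which is precisely where the assumption that $\omega^{2}$ is not a Dirichlet eigenvalue of $-\Delta^{*}$ in $B_{\boldsymbol z}$ enters, since otherwise interior eigenfunctions would produce non-trivial layer densities with vanishing single-layer potential. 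The range identity then delivers $\mathrm{Range}\bigl((F_{\boldsymbol z}^{*}F_{\boldsymbol z})^{1/4}\bigr)=\mathrm{Range}\,G$.

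The hard step is (ii): extracting a G\aa{}rding-type coercive principal part from the elastic single-layer operator on $\partial B_{\boldsymbol z}$ and showing that the compact remainder is well-behaved in two dimensions, where the logarithmic kernel requires a touch more care than in three. The other pieces are essentially bookkeeping (the factorization and the reciprocity identity) or direct consequences of the non-eigenvalue hypothesis; since the full argument has been carried out in \cite{Arens2001}, in practice one simply cites that reference.
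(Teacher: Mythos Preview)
Your proposal is correct and matches the paper's approach: the paper does not prove Lemma~\ref{lemma1} but simply cites \cite{Arens2001}, and your sketch accurately summarizes the factorization argument carried out there, concluding with the same citation. The only minor point is that the full range-identity hypotheses also require a sign condition on the imaginary part of $\Lambda$, which you do not mention explicitly, but since you ultimately defer to \cite{Arens2001} this is immaterial.
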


The following theorem is the main result for the far field equation \eqref{fareq}.\\

\begin{theorem}\label{imp_theorem}
Assume that $\omega^2$ is not a Dirichlet eigenvalue of $-\Delta^*$ in $B_{\boldsymbol z}$.
\begin{itemize}
\item[1.] If $D\subset B_{\boldsymbol z}$, the far field equation \eqref{fareq} is solvable.

\item[2.] If $D\cap B_{\boldsymbol z}=\emptyset$, the far field equation \eqref{fareq} has no solution.

\end{itemize}
\end{theorem}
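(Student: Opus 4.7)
The plan is to apply \autoref{lemma1} to reduce the solvability of \eqref{fareq} to the membership of ${\boldsymbol U}_\infty$ in the range of $G$, and then to analyze the two geometric cases by classical exterior uniqueness and Rellich-type arguments.

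For the first case, $D \subset B_{\boldsymbol z}$: because $\overline{D} \subset B_{\boldsymbol z}$, the scattered field ${\boldsymbol U}$ is a radiating solution of the Navier equation on all of $\mathbb{R}^2 \setminus \overline{B_{\boldsymbol z}}$. Its trace ${\boldsymbol h} := {\boldsymbol U}|_{\partial B_{\boldsymbol z}}$ belongs to $[H^{1/2}(\partial B_{\boldsymbol z})]^2$, and since the exterior Dirichlet problem for the Navier equation with the Kupradze radiation conditions has at most one radiating solution, ${\boldsymbol U}$ is the unique solution with datum ${\boldsymbol h}$. By the definition \eqref{Gdef}, $G{\boldsymbol h}={\boldsymbol U}_\infty$, and then \autoref{lemma1} yields a ${\boldsymbol g} \in \mathbb{L}^2$ solving \eqref{fareq}.

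For the second case, $D \cap B_{\boldsymbol z} = \emptyset$, I would argue by contradiction. Assume ${\boldsymbol U}_\infty = G{\boldsymbol h}$ and let ${\boldsymbol v}$ be the corresponding radiating solution exterior to $B_{\boldsymbol z}$. Decomposing both ${\boldsymbol U}$ and ${\boldsymbol v}$ into compressional and shear parts via \autoref{theorem1} reduces the matching of the vector far fields to the matching of two scalar Helmholtz far fields. The classical Rellich lemma together with unique continuation for the Helmholtz equation then yields ${\boldsymbol v} = {\boldsymbol U}$ on the unbounded component of $\mathbb{R}^2 \setminus (\overline{B_{\boldsymbol z}} \cup \overline{D})$. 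Because $\overline{B_{\boldsymbol z}}$ and $\overline{D}$ are disjoint, ${\boldsymbol v}$ provides an analytic extension $\tilde{\boldsymbol U}$ of ${\boldsymbol U}$ across $\partial D$ and into $D$ as a radiating Navier solution on $\mathbb{R}^2 \setminus \overline{B_{\boldsymbol z}}$. The extension is exploited via $\tilde{\boldsymbol u}^{\textrm{tot}} := {\boldsymbol u}^{\textrm{inc}} + \tilde{\boldsymbol U}$. Inside $D$ this is a Navier solution whose Cauchy data on $\partial D$ coincide with those of the physical total field and therefore satisfy the homogeneous Dirichlet, Neumann, or impedance condition imposed on $\partial D$. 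Under the standard assumption that $\omega^2$ is not an interior eigenvalue of the corresponding boundary value problem on $D$, one concludes $\tilde{\boldsymbol u}^{\textrm{tot}} \equiv {\boldsymbol 0}$ on $D$, and then unique continuation forces $\tilde{\boldsymbol u}^{\textrm{tot}} \equiv {\boldsymbol 0}$ on $\mathbb{R}^2 \setminus \overline{B_{\boldsymbol z}}$. Hence $\tilde{\boldsymbol U} = -{\boldsymbol u}^{\textrm{inc}}$, which is impossible because $\tilde{\boldsymbol U}$ satisfies the Kupradze radiation conditions while the plane wave ${\boldsymbol u}^{\textrm{inc}}$ does not.

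The main obstacle is carrying out the Rellich/unique-continuation step in the vector setting together with the eigenvalue hypothesis for $D$. Both become manageable by routing through \autoref{theorem1} so that the vector statements reduce to scalar Helmholtz ones, and by adopting the standard exceptional-frequency exclusion customary in the sampling-method literature.
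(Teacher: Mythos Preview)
Your argument for Part~1 is essentially the paper's: take the trace of ${\boldsymbol U}$ on $\partial B_{\boldsymbol z}$, invoke the definition of $G$, and apply \autoref{lemma1}. Nothing to add there.

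Part~2, however, has a genuine gap. You extend only in one direction---using ${\boldsymbol v}$ to continue ${\boldsymbol U}$ across $\partial D$ into $D$---and then appeal to interior uniqueness for the homogeneous boundary value problem on $D$. That step forces you to assume that $\omega^2$ is not an interior Dirichlet/Neumann/impedance eigenvalue of $-\Delta^*$ on $D$. This hypothesis is \emph{not} in the theorem; the only spectral assumption stated concerns $B_{\boldsymbol z}$, not the unknown obstacle $D$. So as written you have proved a strictly weaker result, and one that is awkward from the inverse-problem viewpoint since it imposes an unverifiable condition on the object you are trying to reconstruct.

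The paper's proof avoids this entirely by extending in \emph{both} directions. Once Rellich gives ${\boldsymbol u}={\boldsymbol U}$ on $\mathbb{R}^2\setminus(\overline{B_{\boldsymbol z}}\cup\overline{D})$, you use ${\boldsymbol u}$ (defined on $\mathbb{R}^2\setminus\overline{B_{\boldsymbol z}}$) to extend across $\partial D$ \emph{and} ${\boldsymbol U}$ (defined on $\mathbb{R}^2\setminus\overline{D}$) to extend across $\partial B_{\boldsymbol z}$. Because $\overline{B_{\boldsymbol z}}\cap\overline{D}=\emptyset$, the two pieces together cover all of $\mathbb{R}^2$, yielding an entire radiating solution of the Navier equation, which must vanish identically. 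Then ${\boldsymbol U}\equiv{\boldsymbol 0}$ outside $D$, so the total field equals the incident plane wave, which cannot satisfy the homogeneous boundary condition on $\partial D$. No eigenvalue hypothesis on $D$ is needed. Your detour through \autoref{theorem1} to reduce Rellich to the scalar case is fine but unnecessary; the elastic Rellich lemma can be invoked directly.
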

\begin{proof}
1. Assume that $D\subset B_{\boldsymbol z}$. Define ${\boldsymbol h}:={\boldsymbol U}|_{\partial B_{\boldsymbol z}}$. According to the definition of $G$ in \eqref{Gdef}, ${\boldsymbol h}$ is a solution of $G{\boldsymbol h}={\boldsymbol U}_\infty$. Hence ${\boldsymbol U}_\infty$ is in the range of $G$. From Lemma \ref{lemma1}, ${\boldsymbol U}_\infty$ is also in the range of $(F_{\boldsymbol z}^*F_{\boldsymbol z})^{1/4}$.

2. Assume that $D\cap B_{\boldsymbol z}=\emptyset$ and the far field equation \eqref{fareq} has a solution. From Lemma \ref{lemma1}, ${\boldsymbol U}_\infty$ is in the range of $G$. Assume that $G{\boldsymbol h}={\boldsymbol U}_\infty$ for some ${\boldsymbol h}\in [H^{1/2}(\partial B_{\boldsymbol z})]^2$. Let ${\boldsymbol u}$ denote the radiating solution of the exterior Dirichlet problem for $B_{\boldsymbol z}$ with boundary value ${\boldsymbol h}$. Then ${\boldsymbol u}_\infty=G{\boldsymbol h}={\boldsymbol U}_\infty$. From Rellich's lemma, we can identify the radiating solution ${\boldsymbol v}:={\boldsymbol u}={\boldsymbol U}$ in $\mathbb{R}^2\setminus (\overline{B_{\boldsymbol z}}\cup \overline{D})$. Since ${\boldsymbol u}$ is defined in $\mathbb{R}^2\setminus \overline{B_{\boldsymbol z}}$, ${\boldsymbol U}$ is defined in $\mathbb{R}^2\setminus \overline{D}$ and ${\boldsymbol v}$ can be extended from $\mathbb{R}^2\setminus (\overline{B_{\boldsymbol z}}\cup \overline{D})$ into $\mathbb{R}^2$.
That is, ${\boldsymbol v}$ is an entire solution to the Navier equation.
Since ${\boldsymbol v}$ also satisfies the radiation condition, it must vanish identically in $\mathbb{R}^2$.
Then the total field coincides with the incident field and this leads to a contradiction
since the incident field ${\boldsymbol u}^{\textrm{inc}}$ cannot satisfy the boundary condition.
\end{proof}

In consistency with the linear sampling method and the use of partial far field data, instead of solving \eqref{fareq}, we consider the equation
\begin{equation}\label{fareq2}
\alpha {\boldsymbol g}_{\boldsymbol z}^\alpha+F_{\boldsymbol z}^*F_{\boldsymbol z}{\boldsymbol g}_{\boldsymbol z}^\alpha=F_{\boldsymbol z}^*{\boldsymbol U}_\infty,
\end{equation}
where $\alpha>0$ is the regularization parameter. \\

\begin{theorem}\label{imp_theorem2}
Let $F_{\boldsymbol z}$ be the far field operator \eqref{faroperator} and assume that $\omega^2$ is not a Dirichlet eigenvalue of $-\Delta^*$ in $B_{\boldsymbol z}$. Denote by ${\boldsymbol g}_{\boldsymbol z}$ the solution of $(F_{\boldsymbol z}^*F_{\boldsymbol z})^{1/4}{\boldsymbol g}={\boldsymbol U}_\infty$ if it exists. For $\alpha>0$, let ${\boldsymbol g}_{\boldsymbol z}^\alpha$ denote the solution of \eqref{fareq2} and ${\boldsymbol v}_{{\boldsymbol g}_{\boldsymbol z}^\alpha}$ denote the Herglotz wave function \eqref{herglotz} with kernel ${\boldsymbol g}_{\boldsymbol z}^\alpha$.
\begin{itemize}
\item[1.] If $D\subset B_{\boldsymbol z}$, then $\lim_{\alpha\rightarrow 0}{\boldsymbol v}_{{\boldsymbol g}_{\boldsymbol z}^\alpha}({\boldsymbol x})$ exists and
\begin{equation*}
c\|{\boldsymbol g}_{\boldsymbol z}\|^2\leq \lim_{\alpha\rightarrow 0} \|{\boldsymbol v}_{{\boldsymbol g}_{\boldsymbol z}^\alpha}({\boldsymbol x})\| \leq \|{\boldsymbol g}_{\boldsymbol z}\|^2
\end{equation*}
for some positive $c$ depending only on $B_{\boldsymbol z}$.

\item[2.] If $D\cap B_{\boldsymbol z}=\emptyset$, then $\lim_{\alpha\rightarrow 0}{\boldsymbol v}_{{\boldsymbol g}_{\boldsymbol z}^\alpha}({\boldsymbol x})\rightarrow \infty$.

\end{itemize}
\end{theorem}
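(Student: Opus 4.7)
The plan is to apply the Kirsch--Arens factorization-method machinery developed in \cite{Arens2001} to the far field operator $F_{\boldsymbol z}$ associated with the rigid disk $B_{\boldsymbol z}$. The two structural pillars already in place are \autoref{lemma1}, which identifies the ranges of the compact operator $(F_{\boldsymbol z}^*F_{\boldsymbol z})^{1/4}$ and of the data-to-pattern operator $G$, and \autoref{imp_theorem}, which converts the geometric dichotomy $D\subset B_{\boldsymbol z}$ versus $D\cap B_{\boldsymbol z}=\emptyset$ into a solvability alternative for the exact far field equation \eqref{fareq}. On top of these I would use the standard Tikhonov regularization theory applied to $(F_{\boldsymbol z}^*F_{\boldsymbol z})^{1/4}$, whose singular value decomposition makes the asymptotic behavior of ${\boldsymbol g}_{\boldsymbol z}^\alpha$ transparent.

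For Part 1, \autoref{imp_theorem} guarantees an exact solution ${\boldsymbol g}_{\boldsymbol z}\in\mathbb{L}^2$ of the unregularized equation. Expanding ${\boldsymbol g}_{\boldsymbol z}^\alpha$ in the singular basis of $(F_{\boldsymbol z}^*F_{\boldsymbol z})^{1/4}$ and using the Tikhonov filter $\sigma_n^2/(\alpha+\sigma_n^2)$, I obtain ${\boldsymbol g}_{\boldsymbol z}^\alpha\to{\boldsymbol g}_{\boldsymbol z}$ in $\mathbb{L}^2$ together with the elementary a priori estimate $\|{\boldsymbol g}_{\boldsymbol z}^\alpha\|_{\mathbb{L}^2}\leq\|{\boldsymbol g}_{\boldsymbol z}\|_{\mathbb{L}^2}$. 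Since the elastic Herglotz map ${\boldsymbol g}\mapsto{\boldsymbol v}_{\boldsymbol g}$ defined by \eqref{herglotz} is bounded from $\mathbb{L}^2$ into $H^1$ on compact sets, this yields at once the existence of $\lim_{\alpha\to 0}{\boldsymbol v}_{{\boldsymbol g}_{\boldsymbol z}^\alpha}$ and the upper estimate. The lower estimate requires a coercivity bound of the schematic form $c\|{\boldsymbol g}\|_{\mathbb{L}^2}^2\leq\|{\boldsymbol v}_{\boldsymbol g}\|$, which follows from the factorization $F_{\boldsymbol z}=GSG^*$ once the middle operator $S$ is shown to satisfy a sign condition on its imaginary part and $G$ is injective with closed range — the latter being precisely what the non-eigenvalue hypothesis buys us. Passing to the limit $\alpha\to 0$ along ${\boldsymbol g}_{\boldsymbol z}^\alpha$ then completes Part 1.

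For Part 2, I would argue by contradiction: if $\|{\boldsymbol v}_{{\boldsymbol g}_{\boldsymbol z}^\alpha}\|$ stayed bounded along a subsequence $\alpha_n\downarrow 0$, the boundedness of the corresponding $\{{\boldsymbol g}_{\boldsymbol z}^{\alpha_n}\}$ and weak compactness would furnish a weak limit ${\boldsymbol g}^\star$; using continuity of $(F_{\boldsymbol z}^*F_{\boldsymbol z})^{1/4}$ and the regularized equation \eqref{fareq2}, ${\boldsymbol g}^\star$ would be a bona fide solution of \eqref{fareq}, contradicting \autoref{imp_theorem} Part 2 and forcing $\lim_{\alpha\to 0}\|{\boldsymbol v}_{{\boldsymbol g}_{\boldsymbol z}^\alpha}\|=\infty$. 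I expect the main technical obstacle to be the coercivity bound in Part 1: one must verify that the middle operator in the factorization of the elastic Dirichlet far field operator for a disk inherits the required sign-definite imaginary part, and that the constant $c$ depends only on $B_{\boldsymbol z}$. This is a quantitative statement that Arens~\cite{Arens2001} establishes in generality, but which must be traced carefully through the disk-specific series representations \eqref{Farfield-Bp}--\eqref{Farfield-Bs} to confirm applicability in the present two-wave-speed elastic setting.
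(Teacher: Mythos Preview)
Your overall strategy is the same as the paper's: the paper's proof is a single sentence pointing to \autoref{lemma1}, \autoref{imp_theorem}, and the analogous acoustic result (Theorem~5.41 and Corollary~5.42 in \cite{ColtonKress2013}), and you are essentially unpacking that citation via the Arens factorization machinery. So in spirit your plan matches the paper.

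However, there is a genuine gap in your treatment of Part~1. You claim that from the Tikhonov filter you obtain ${\boldsymbol g}_{\boldsymbol z}^\alpha\to{\boldsymbol g}_{\boldsymbol z}$ in $\mathbb{L}^2$. This is not what happens, and the argument cannot proceed this way. The regularized equation \eqref{fareq2} is the Tikhonov regularization of $F_{\boldsymbol z}{\boldsymbol g}={\boldsymbol U}_\infty$, \emph{not} of $(F_{\boldsymbol z}^*F_{\boldsymbol z})^{1/4}{\boldsymbol g}={\boldsymbol U}_\infty$. Even when the latter has a solution ${\boldsymbol g}_{\boldsymbol z}$, the former typically does not (its range is strictly smaller), so ${\boldsymbol g}_{\boldsymbol z}^\alpha$ need not converge in $\mathbb{L}^2$ at all, and certainly not to ${\boldsymbol g}_{\boldsymbol z}$. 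The correct argument, as in \cite{ColtonKress2013}, expands everything in the singular system $(\lambda_n,\varphi_n,\psi_n)$ of the normal operator $F_{\boldsymbol z}$, writes out the Tikhonov coefficients explicitly, and then uses the factorization $F_{\boldsymbol z}=-GS^*G^*$ together with the coercivity of $S$ to show directly that the \emph{Herglotz image} ${\boldsymbol v}_{{\boldsymbol g}_{\boldsymbol z}^\alpha}$ converges and that its limiting norm is squeezed between $c\|{\boldsymbol g}_{\boldsymbol z}\|^2$ and $\|{\boldsymbol g}_{\boldsymbol z}\|^2$. The square on $\|{\boldsymbol g}_{\boldsymbol z}\|$ in the statement is precisely the fingerprint of this mismatch between regularizing $F_{\boldsymbol z}$ and solving for $(F_{\boldsymbol z}^*F_{\boldsymbol z})^{1/4}$; it does not arise from a simple continuity bound on the Herglotz map applied to a convergent ${\boldsymbol g}_{\boldsymbol z}^\alpha$. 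Your Part~2 sketch and the identification of the coercivity issue are fine, but Part~1 needs to be rerouted through the explicit spectral computation rather than through convergence of the densities.
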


\begin{proof}
With the property of $F_{\boldsymbol z}$ in Lemma \ref{lemma1}, and also \autoref{imp_theorem}, the proof is analogous to the proof of Theorem 5.41 and Corollary 5.42 of \cite{ColtonKress2013}.
\end{proof}

Now we are ready to present the extended sampling method (ESM) for {\bf IP-F}. Let $\Omega$ be a domain containing $D$. For a sampling point ${\boldsymbol z} \in \Omega$, 
by \autoref{imp_theorem2}, one expects that the solution $\|{\boldsymbol g}^\alpha_{\boldsymbol z}\|_{L^2(\mathbb{S})}$ of \eqref{fareq2} is relatively large when $D$ is outside $B_{\boldsymbol z}$
and relatively small when $D$ is inside $B_{\boldsymbol z}$.
Consequently, an approximation of the location and size of the scatterer $D$ can be obtained by plotting
$\|{\boldsymbol g}^\alpha_{\boldsymbol z}\|_{L^2(\mathbb{S})}$ for all sampling points ${\boldsymbol z} \in \Omega$.\\

\begin{itemize}
\item[ ] \hspace{-1cm}{\bf{The Extended Sampling Method for IP-F}}{\em
\item[1.] Generate a set $T$ of sampling points for $\Omega$ which contains $D$.
\item[3.] For each sampling point ${\boldsymbol z} \in T$,
	\begin{itemize}
	\item[a.] compute ${\boldsymbol u}^{B_{\boldsymbol z}}_\infty(\hat{\boldsymbol x}; {\boldsymbol d},1,0)$ and ${\boldsymbol u}^{B_{\boldsymbol z}}_\infty(\hat{\boldsymbol x}; {\boldsymbol d},0,1)$, $\hat{\boldsymbol x}, {\boldsymbol d}\in \mathbb{S}$;
	\item[b.] choose a Tikhonov regularization parameter $\alpha$ and compute an approximate solution ${\boldsymbol g}^\alpha_{\boldsymbol z}$ to \eqref{fareq2}.
	\end{itemize}
\item[4.] Find the global minimum point ${\boldsymbol z}^*\in T$ for $\|{\boldsymbol g}^\alpha_{\boldsymbol z}\|_{L^2(\mathbb{S})}$.
\item[5.] Choose $B_{{\boldsymbol z}^*}$ to be the reconstruction for $D$.}
\end{itemize}

\begin{remark}
As the Multilevel ESM for {\bf IP-P}, one can use a similar multilevel technique to choose a suitable radius $R$ of the sampling disks.
\end{remark}

\section{Numerical examples}\label{section4}
We present some numerical examples to show the performance of the proposed method. The synthetic far field data is generated using the boundary integral equation method in \cite{WangQiu2017}.

We consider three obstacles: a rigid pear given by
\begin{equation*}
(1+0.15\cos 3\theta)\big(\cos \theta, \sin \theta\big)+(-2,3), \ \ \ \theta\in [0,2\pi),
\end{equation*}
a cavity peanut given by
\begin{equation*}
1.5\sqrt{\cos^2 \theta+0.25\sin^2 \theta}\big(\cos \theta, \sin \theta\big)+(-2,3), \ \ \ \theta\in [0,2\pi),
\end{equation*}
and a kite with impedance boundary condition ($\sigma=2$) given by
\begin{equation*}
\big(1.5\sin \theta, \cos \theta +0.65 \cos 2\theta-0.65\big)+\big(-2,3\big),\ \ \ \theta\in [0,2\pi).
\end{equation*}

For all numerical examples, $\omega=\pi, \mu=1, \lambda=2$. The incident plane wave is
${\boldsymbol u}^{inc}({\boldsymbol x})={\boldsymbol d} e^{ik_p{\boldsymbol x}\cdot {\boldsymbol d}}+{\boldsymbol d}^\perp e^{ik_s {\boldsymbol x}\cdot {\boldsymbol d}}$ with ${\boldsymbol d}=(1/2;\sqrt{3}/2)$.

For the inverse problem {\bf IP-P}, which uses only the compressional part or shear part of the far field pattern, the synthetic data is a $52\times 1$ vector ${\boldsymbol f}_P=U_p^\infty(\hat{\boldsymbol x}_j, {\boldsymbol d})$ or ${\boldsymbol f}_P=U_s^\infty(\hat{\boldsymbol x}_j,{\boldsymbol d})$ with $52$ observation directions $\hat{\boldsymbol x}_j, j=1,2,\cdots, 52$, uniformly distributed on the unit circle.

For the inverse problem {\bf IP-F}, ${\boldsymbol f}_F=\big(U_p^\infty(\hat{\boldsymbol x}_j,{\boldsymbol d}); U_s^\infty(\hat{\boldsymbol x}_j,{\boldsymbol d})\big), j=1,2,\cdots, 52$, is a $104\times 1$ vector.

\subsection{Examples for IP-P}
Let $\Omega=[-5,5]\times [-5, 5]$ and choose the samplings points to be
\begin{equation}\label{mesh}
T:=\{(-5+0.1m, -5+0.1n), \quad m,n=0, 1, \cdots, 100\}.
\end{equation}

For each mesh point ${\boldsymbol z} \in T$, we use the Tikhonov regularization with a fixed parameter $\alpha=10^{-5}$.
Equation \eqref{fe} leads to a linear system $A^{\boldsymbol z}\vec{\boldsymbol g}_{\boldsymbol z}={\boldsymbol f}_P$, where $A^{\boldsymbol z}$ is the matrix given by
\[
A^{\boldsymbol z}_{l,j}=e^{i k{\boldsymbol z}\cdot (\hat{\boldsymbol x}_j-\hat{\boldsymbol x}_l)}{\boldsymbol u}^B_{\infty}(\hat{\boldsymbol x}_l,\hat{\boldsymbol x}_j), \quad l, j=1,2,\cdots,52.
\]
The regularized solution is given by
\begin{equation*}
\vec{\boldsymbol g}^\alpha_{\boldsymbol z}\approx \big((A^{\boldsymbol z})^\ast A^{\boldsymbol z}+\alpha I\big)^{-1}(A^{\boldsymbol z})^\ast {\boldsymbol f}_P,
\end{equation*}
where $I$ is the identity matrix.
We plot the contours for the indicator function
\begin{equation}\label{Iz}
 I_{\boldsymbol z} = \frac{\|\vec{\boldsymbol g}^\alpha_{\boldsymbol z}\|_{l^2}}{\max\limits_{{\boldsymbol z}\in T}\|\vec{\boldsymbol g}^\alpha_{\boldsymbol z}\|_{l^2}}
\end{equation}
for all the sampling points ${\boldsymbol z}\in T$.

\autoref{fig1} shows the results for the rigid pear. The asterisks are the minimum locations of $I_{\boldsymbol z}$.
The solid lines are the reconstructions and the red dashed lines are the exact boundaries.
Similar results for the cavity peanut and the kite with impedance boundary condition are shown in \autoref{fig2} and \autoref{fig3}, respectively.

\begin{figure}[h!]
\begin{minipage}[t]{\linewidth}
\begin{minipage}[t]{0.40\linewidth}
\includegraphics[angle=0, width=\textwidth]{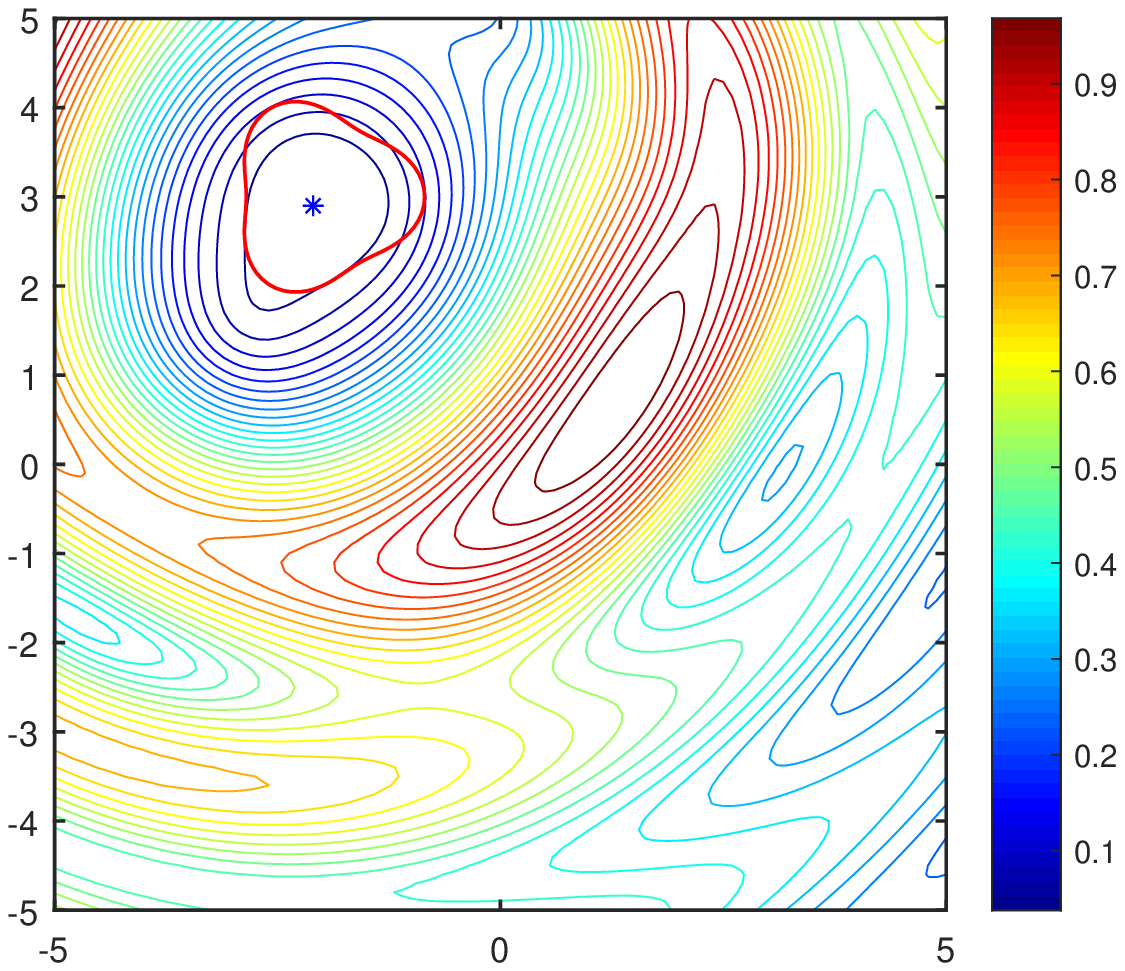}
\vspace{0.5cm}
\end{minipage}
\hspace{1.5cm}
\begin{minipage}[t]{0.40\linewidth}
\includegraphics[angle=0, width=0.83\textwidth]{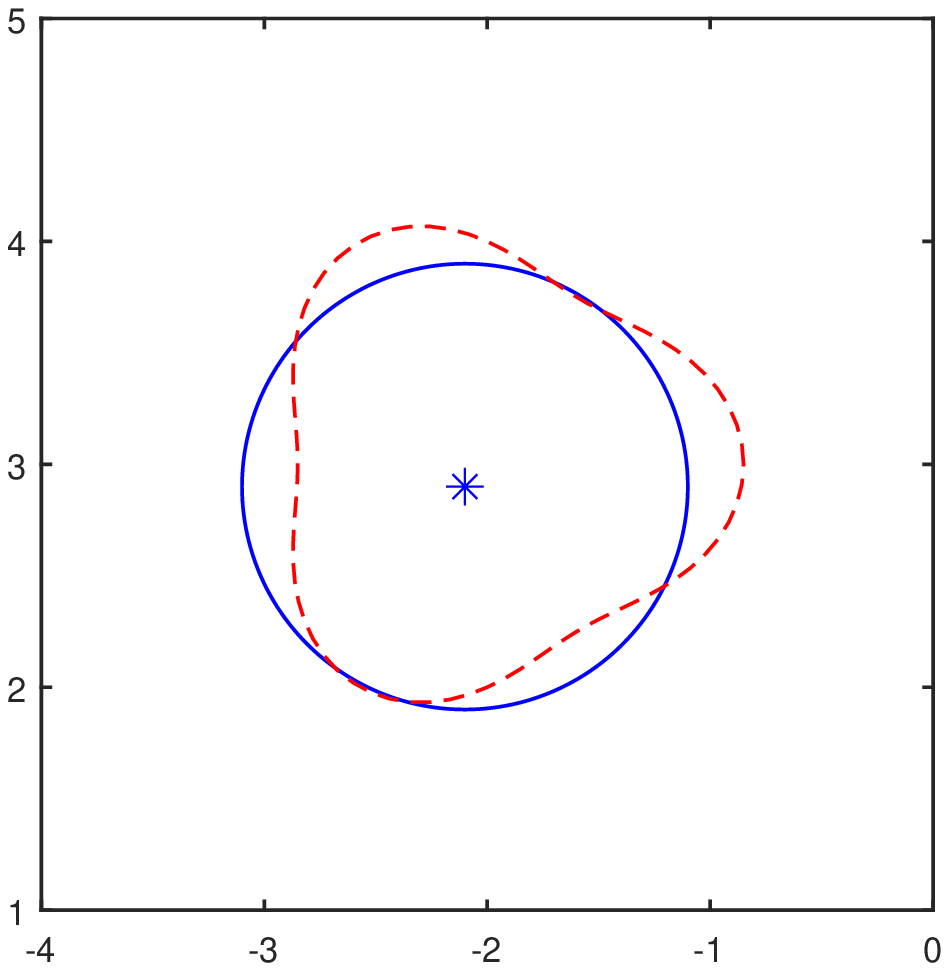}
\end{minipage}
\end{minipage}

\begin{minipage}[t]{\linewidth}
\begin{minipage}[t]{0.40\linewidth}
\includegraphics[angle=0, width=\textwidth]{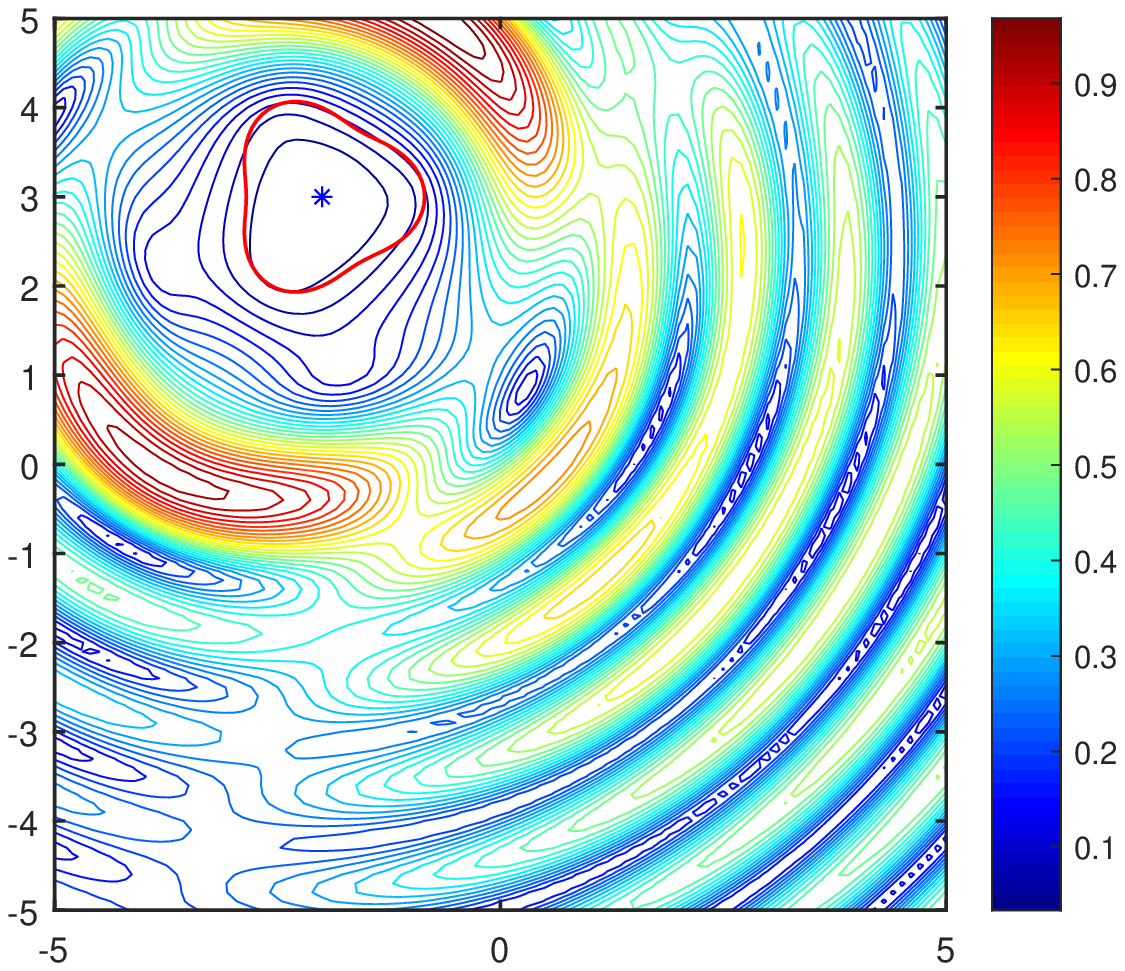}
\end{minipage}
\hspace{1.5cm}
\begin{minipage}[t]{0.40\linewidth}
\includegraphics[angle=0, width=0.83\textwidth]{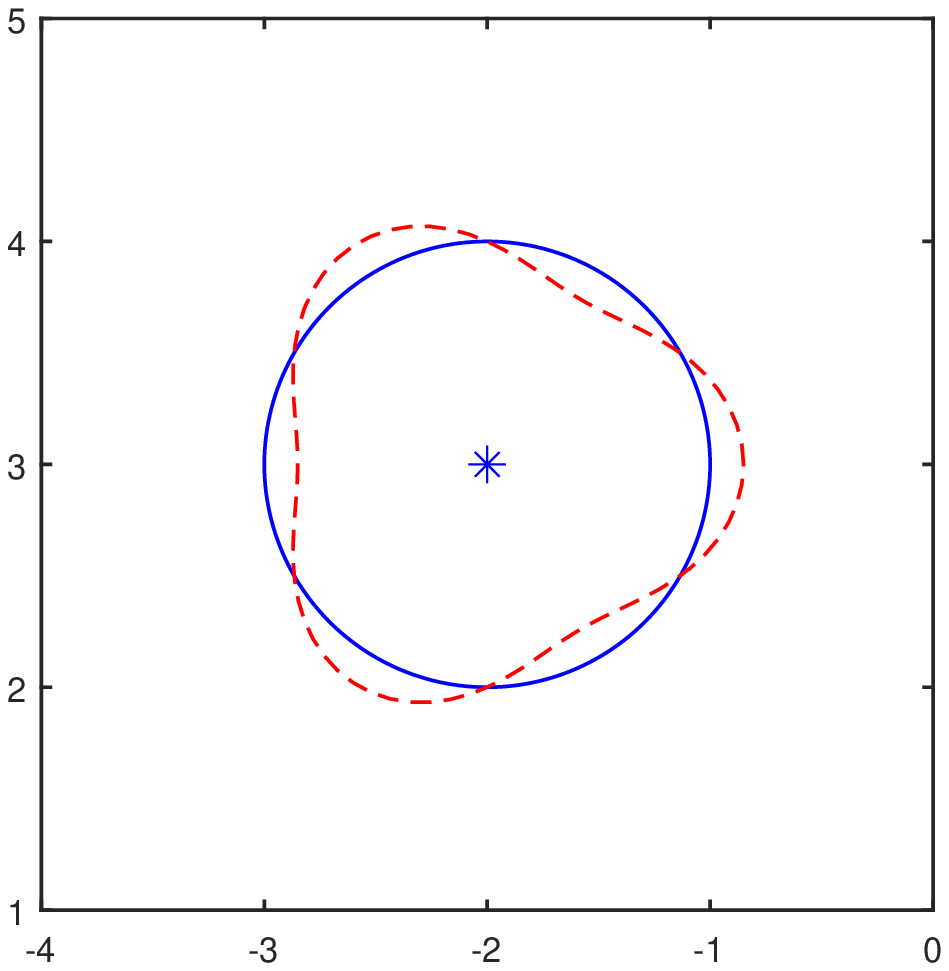}
\end{minipage}
\end{minipage}
\caption{\label{fig1} Reconstructions of the rigid pear.  Top left: contour plot of $I_z$ using compressional part of the far field pattern; Top right: reconstruction using the compressional part of the far field pattern; Bottom left: contour plot of $I_z$ using the shear part of the far field pattern; Bottom right: reconstruction using the shear part of the far field pattern.}
\end{figure}

\begin{figure}[ht!]
\begin{minipage}[t]{\linewidth}
\begin{minipage}[t]{0.40\linewidth}
\includegraphics[angle=0, width=\textwidth]{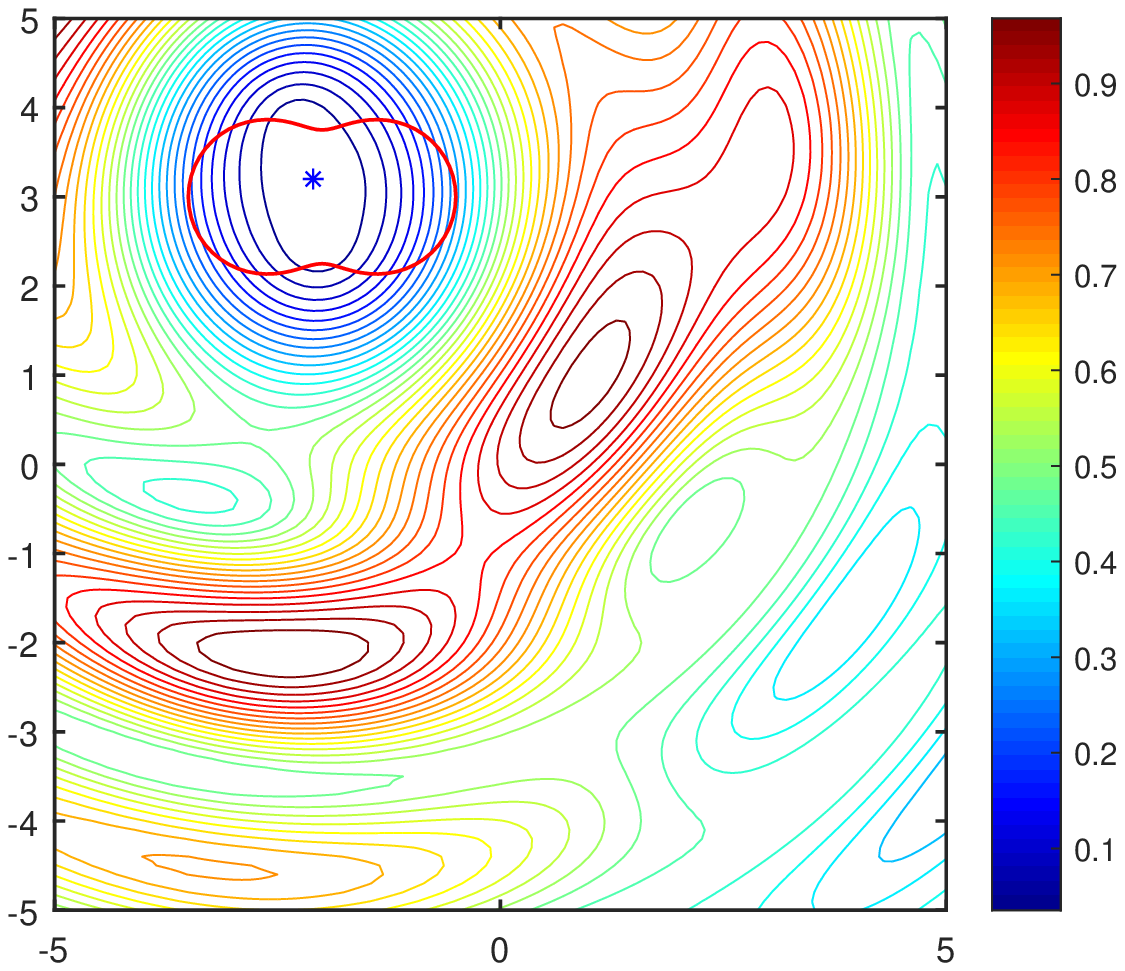}
\vspace{0.5cm}
\end{minipage}
\hspace{1.5cm}
\begin{minipage}[t]{0.40\linewidth}
\includegraphics[angle=0, width=0.83\textwidth]{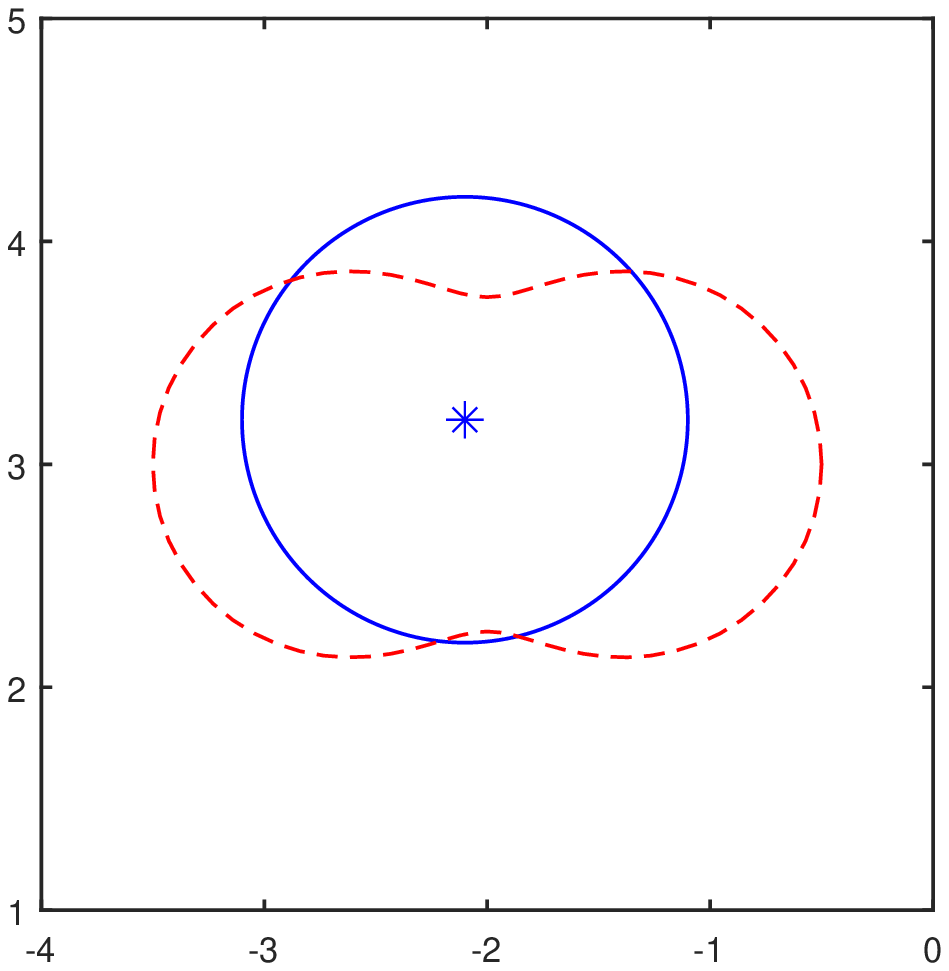}
\end{minipage}
\end{minipage}

\begin{minipage}[t]{\linewidth}
\begin{minipage}[t]{0.40\linewidth}
\includegraphics[angle=0, width=\textwidth]{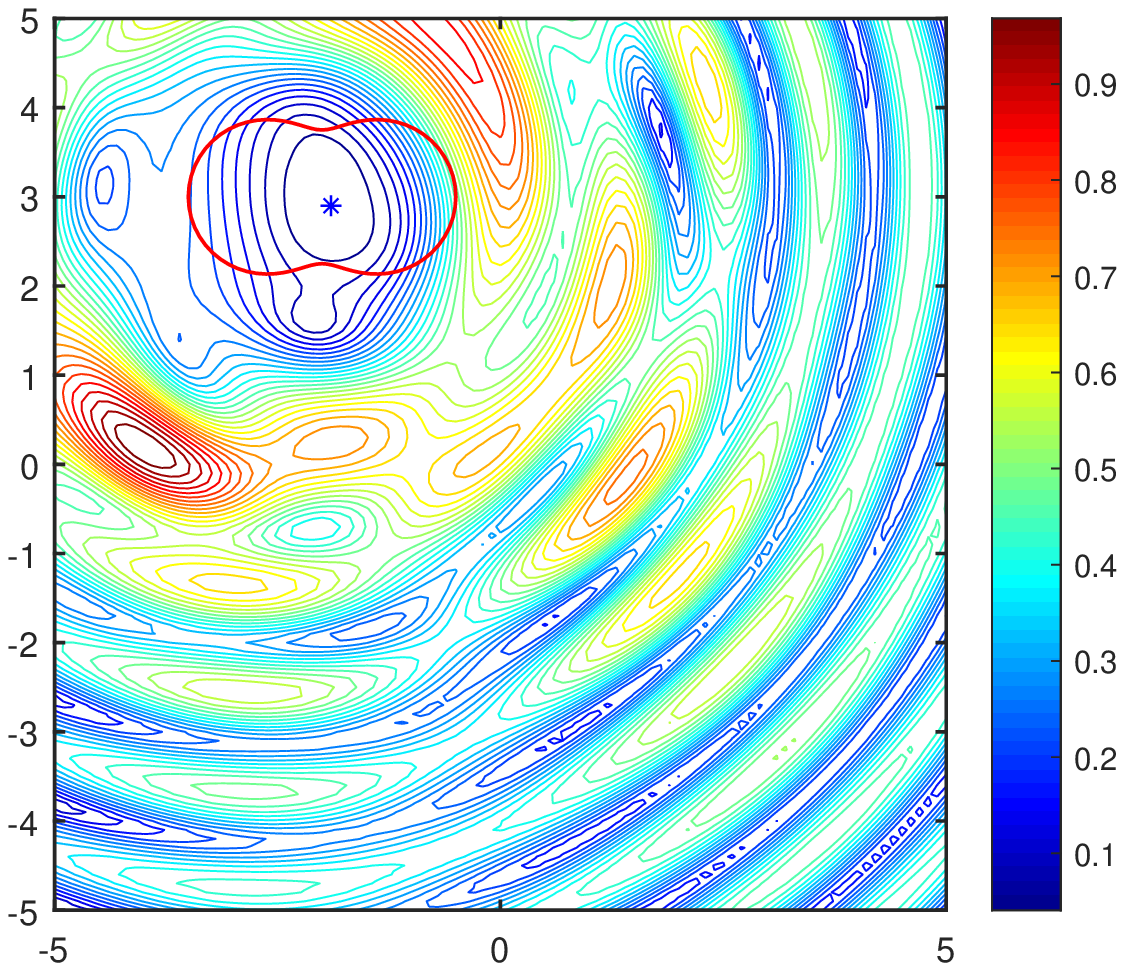}
\end{minipage}
\hspace{1.5cm}
\begin{minipage}[t]{0.40\linewidth}
\includegraphics[angle=0, width=0.83\textwidth]{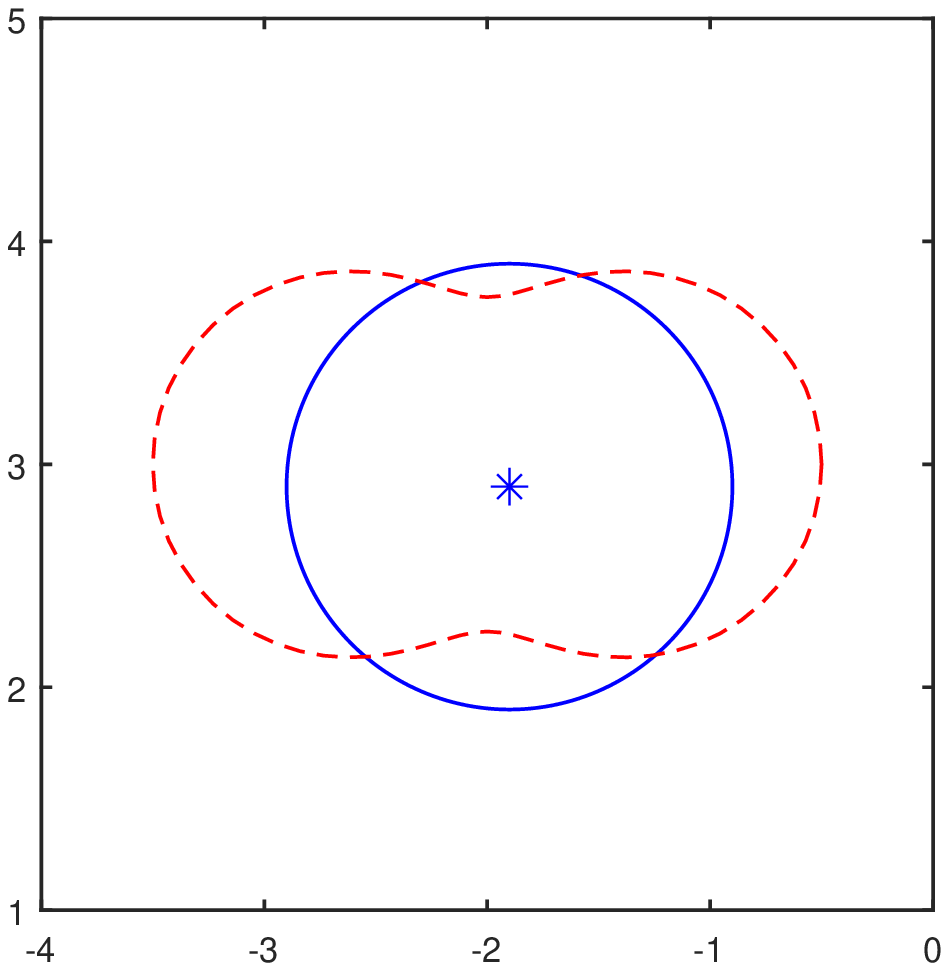}
\end{minipage}
\end{minipage}
\caption{\label{fig2} Reconstructions of the cavity peanut. Top left: contour plot of $I_{\boldsymbol z}$ using the compressional part of the far field pattern; Top right: reconstruction using the compressional part of the far field pattern; Bottom left: contour plot of $I_{\boldsymbol z}$ using the shear part of the far field pattern; Bottom right: reconstruction using the shear part of the far field pattern.}
\end{figure}


\begin{figure}[ht!]
\begin{minipage}[t]{\linewidth}
\begin{minipage}[t]{0.40\linewidth}
\includegraphics[angle=0, width=\textwidth]{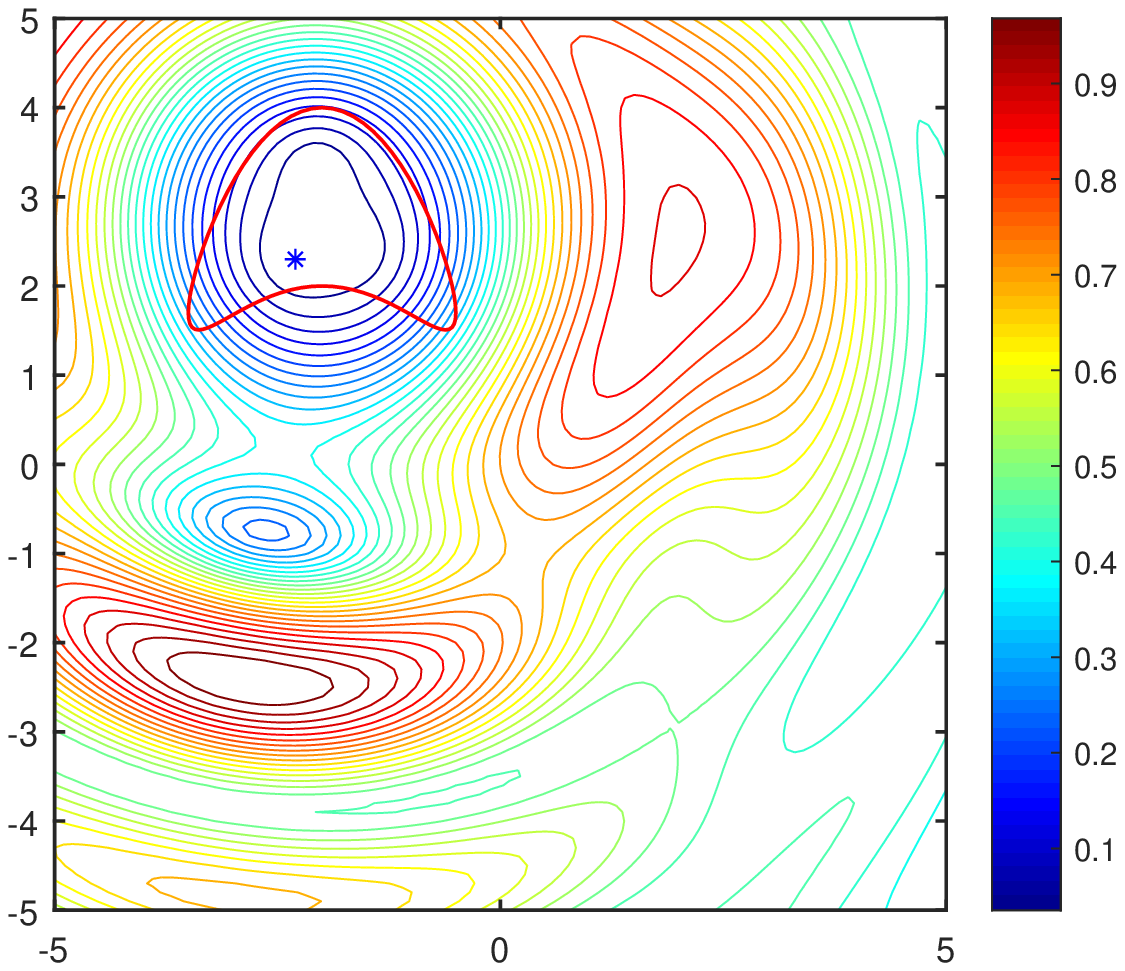}
\vspace{0.5cm}
\end{minipage}
\hspace{1.5cm}
\begin{minipage}[t]{0.40\linewidth}
\includegraphics[angle=0, width=0.83\textwidth]{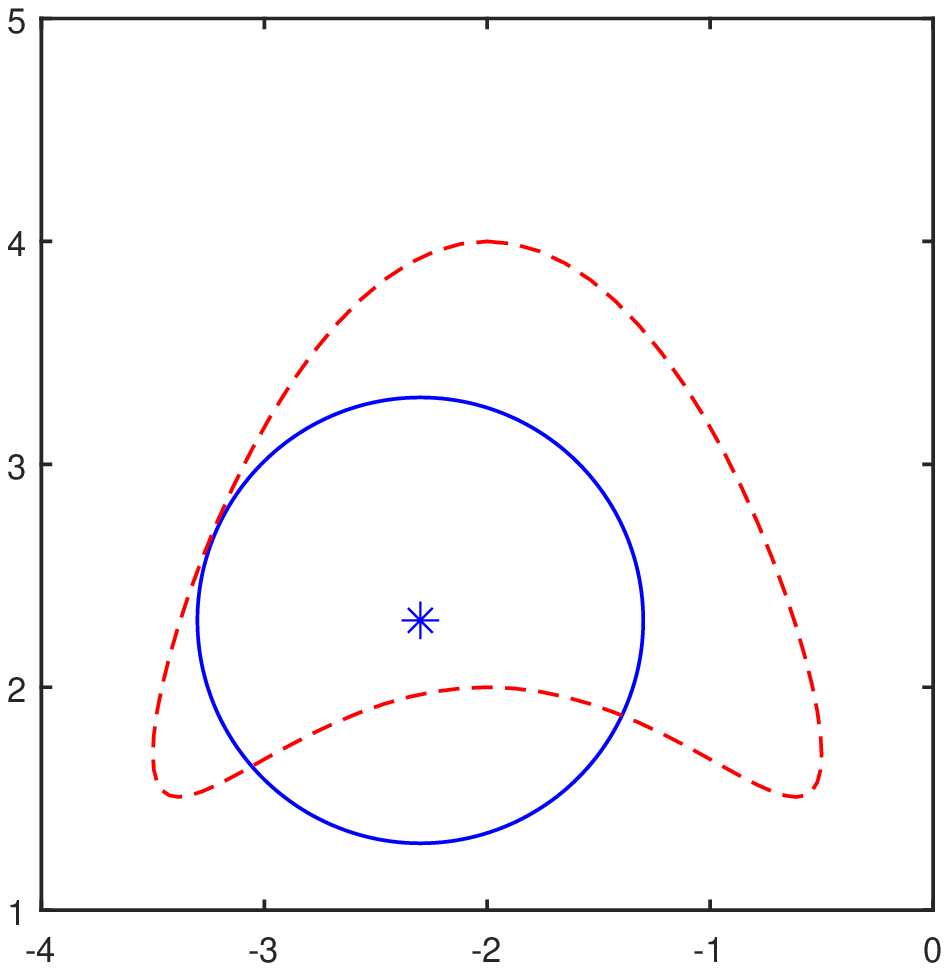}
\end{minipage}
\end{minipage}

\begin{minipage}[t]{\linewidth}
\begin{minipage}[t]{0.40\linewidth}
\includegraphics[angle=0, width=\textwidth]{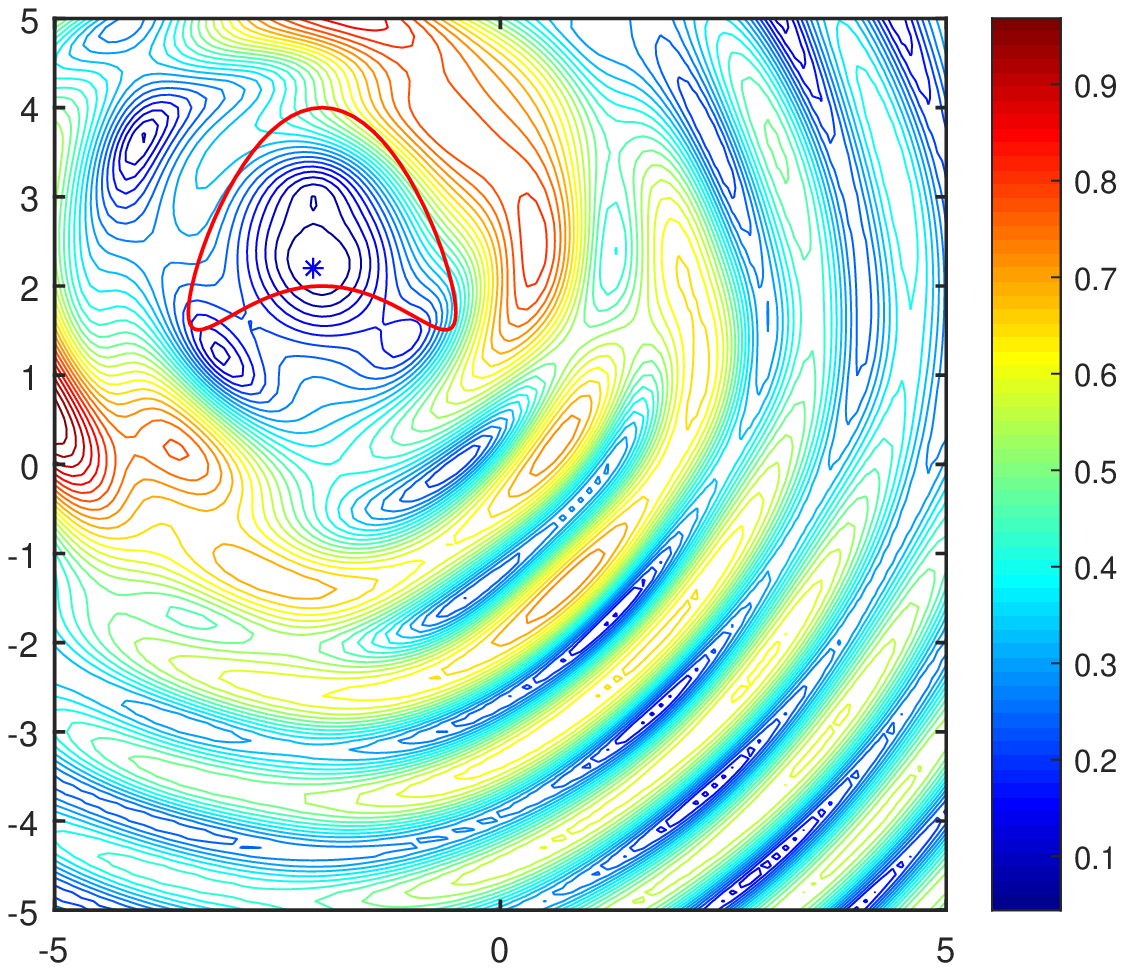}
\end{minipage}
\hspace{1.5cm}
\begin{minipage}[t]{0.40\linewidth}
\includegraphics[angle=0, width=0.83\textwidth]{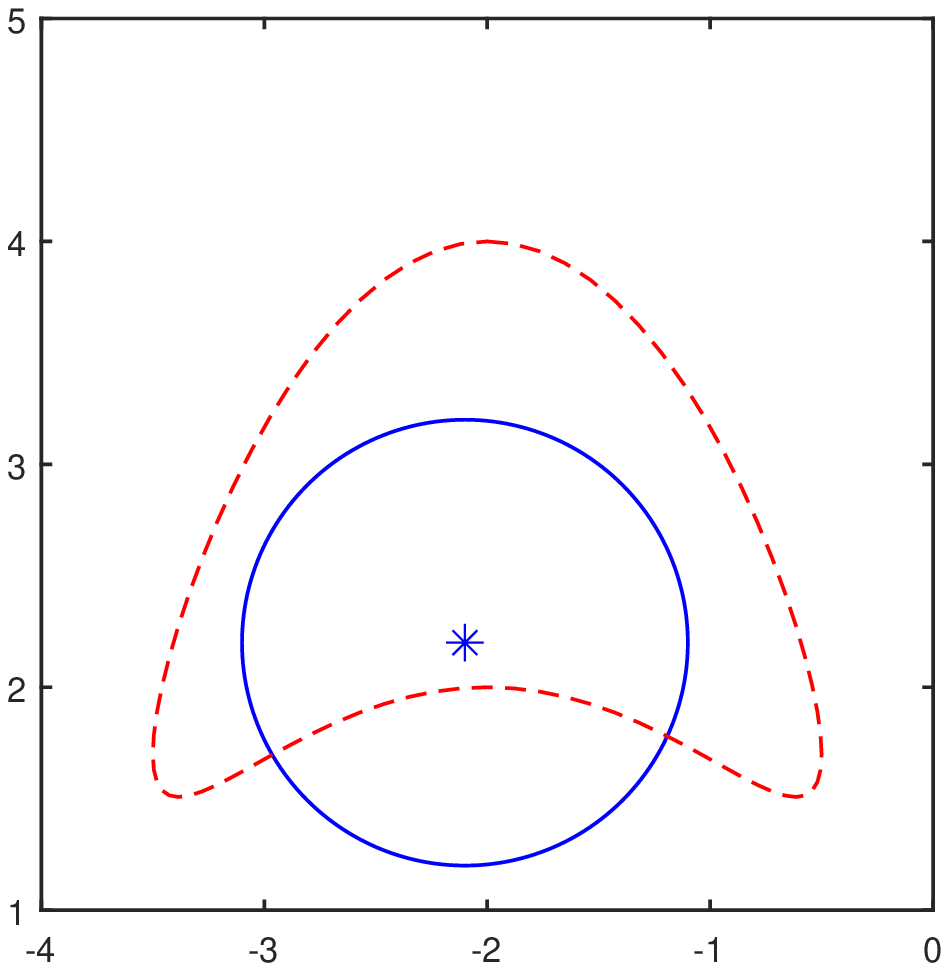}
\end{minipage}
\end{minipage}
\caption{\label{fig3} Reconstructions of the impedance kite. Top left: contour plot of $I_z$ using the compressional part of the far field pattern; Top right: reconstruction the the compressional part of the far field pattern; Bottom left: contour plot of $I_z$ using the shear part of the far field pattern; Bottom right: construction using the shear part of the far field pattern.}
\end{figure}

Since the size of the scatterer is not known in advance, one can determine the radius $R$ of the sampling disks using the multilevel ESM.
We start with a large sampling disk ($R=2.4$) and decrease the radius until a suitable $R$ is found.

For the rigid pear, $R$ is found to be $0.6$ using either the compressional part or shear part of the far field pattern.
For the cavity peanut and impedance kite, the radius $R$ is $0.3$ using the compressional part of the far field pattern.
The radius $R$ is $1.2$ using the shear part of the far field pattern.

\autoref{fig4} shows the reconstructions of the multilevel ESM for the pear with Dirichlet boundary condition, the peanut with Neumann boundary condition, and the kite with the impedance boundary condition  ($\sigma=2$).

\begin{figure}[h!]
\begin{minipage}[t]{\linewidth}
\begin{minipage}[t]{0.30\linewidth}
\includegraphics[angle=0, width=\textwidth]{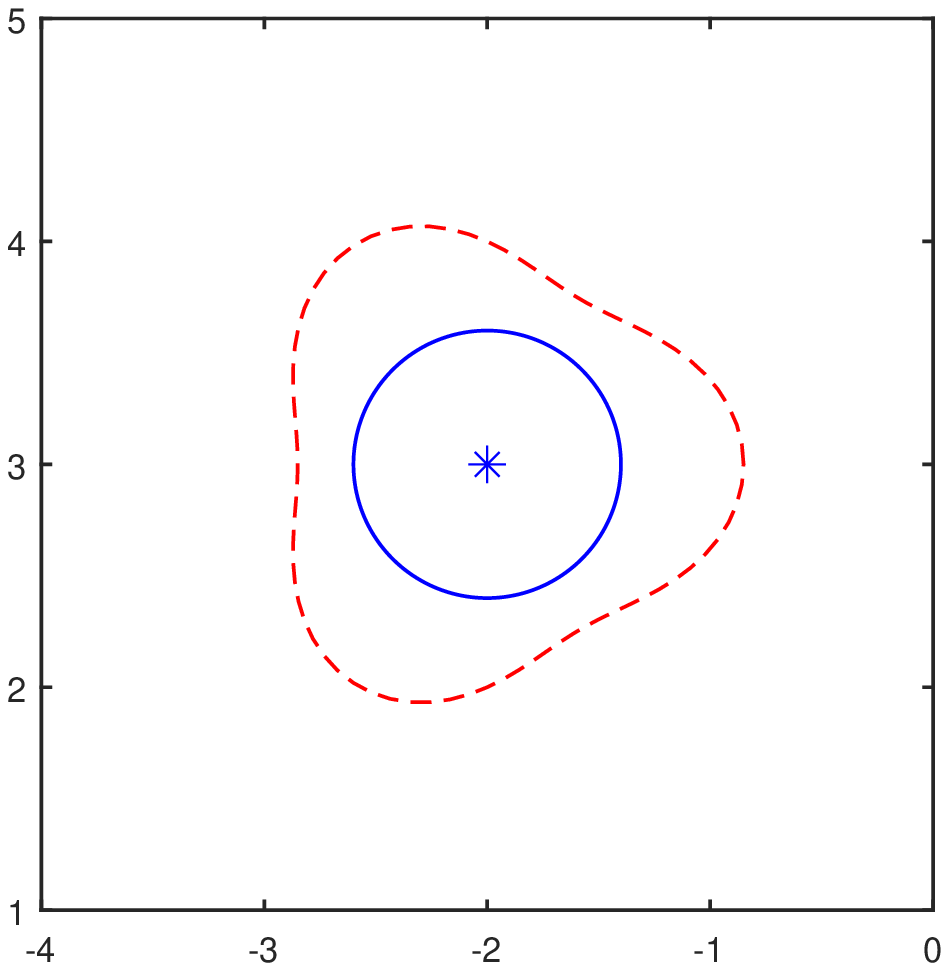}
\vspace{0.3cm}
\end{minipage}
\hspace{0.2cm}
\begin{minipage}[t]{0.30\linewidth}
\includegraphics[angle=0, width=\textwidth]{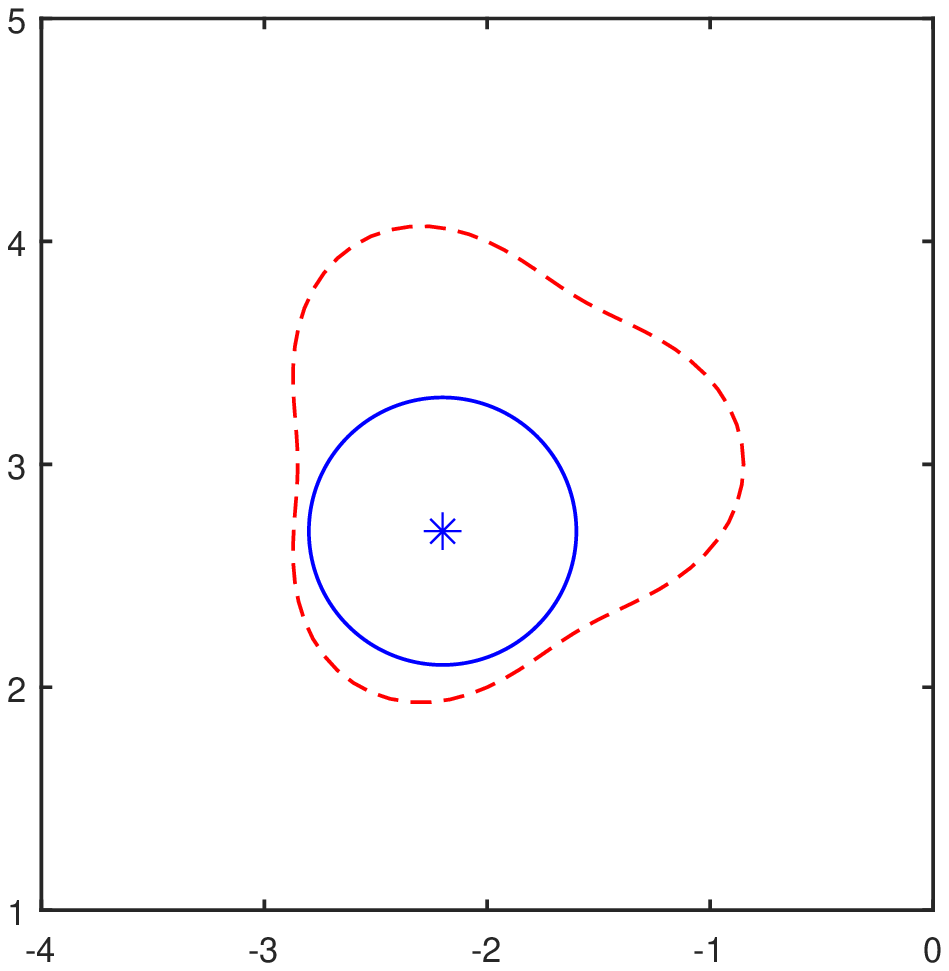}

\end{minipage}
\hspace{0.2cm}
\begin{minipage}[t]{0.30\linewidth}
\includegraphics[angle=0, width=\textwidth]{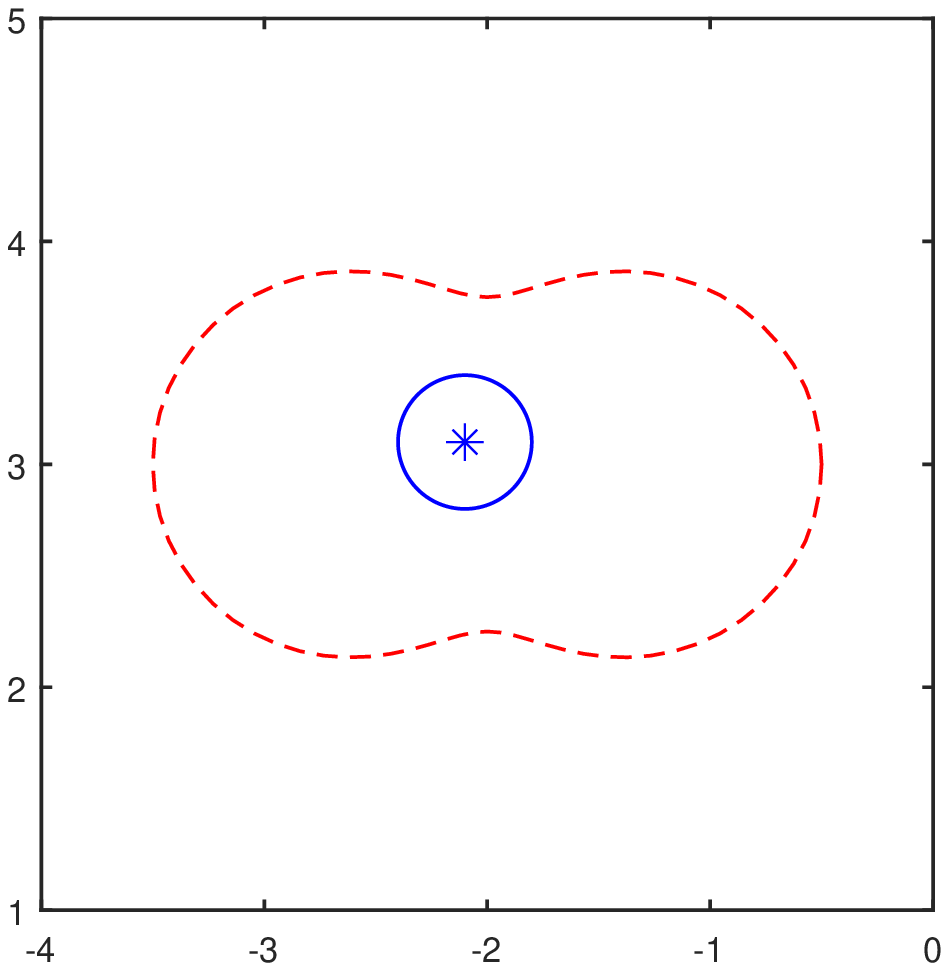}

\end{minipage}
\end{minipage}

\begin{minipage}[t]{\linewidth}
\begin{minipage}[t]{0.30\linewidth}
\includegraphics[angle=0, width=\textwidth]{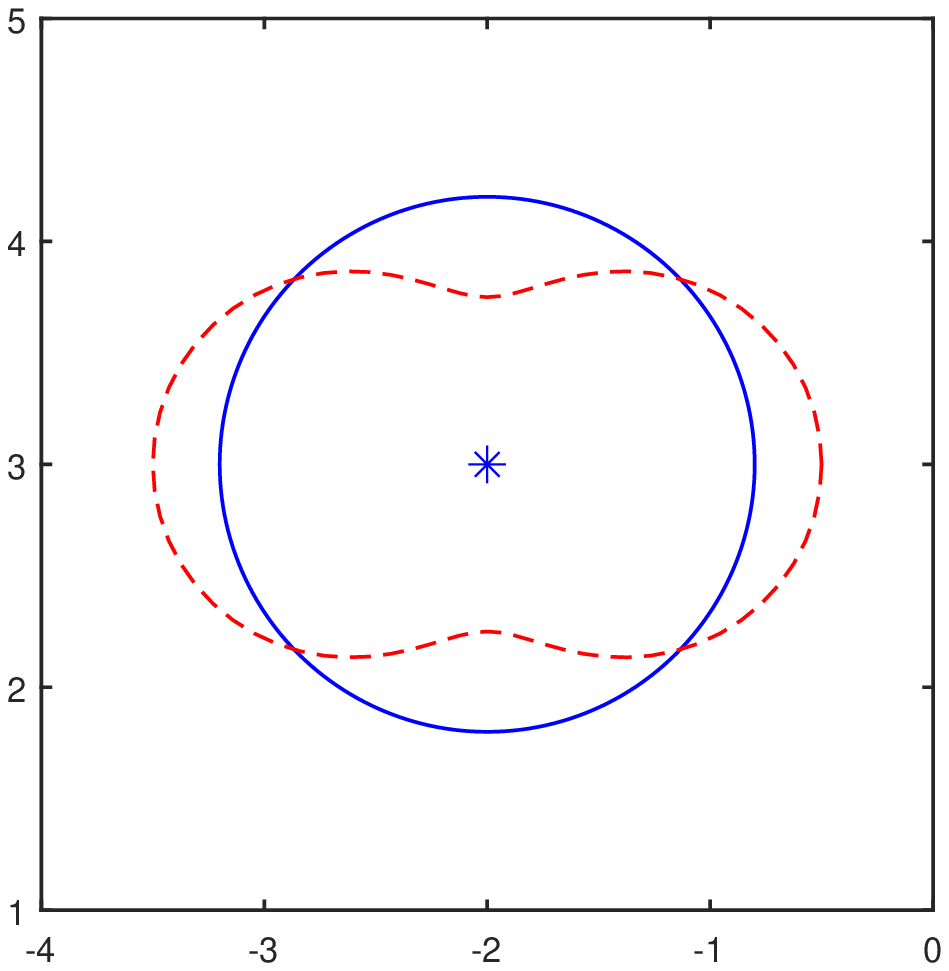}
\vspace{-0.9cm}

\end{minipage}
\hspace{0.2cm}
\begin{minipage}[t]{0.30\linewidth}
\includegraphics[angle=0, width=\textwidth]{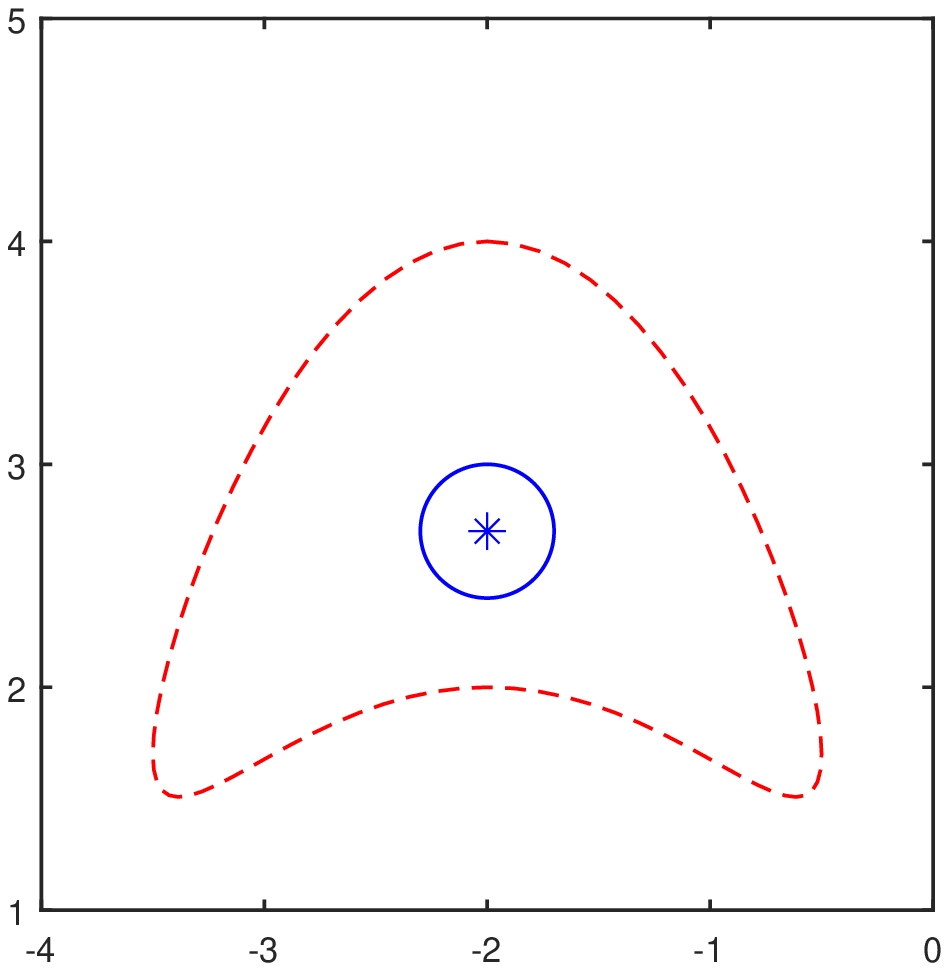}
\vspace{-0.9cm}

\end{minipage}
\hspace{0.2cm}
\begin{minipage}[t]{0.30\linewidth}
\includegraphics[angle=0, width=\textwidth]{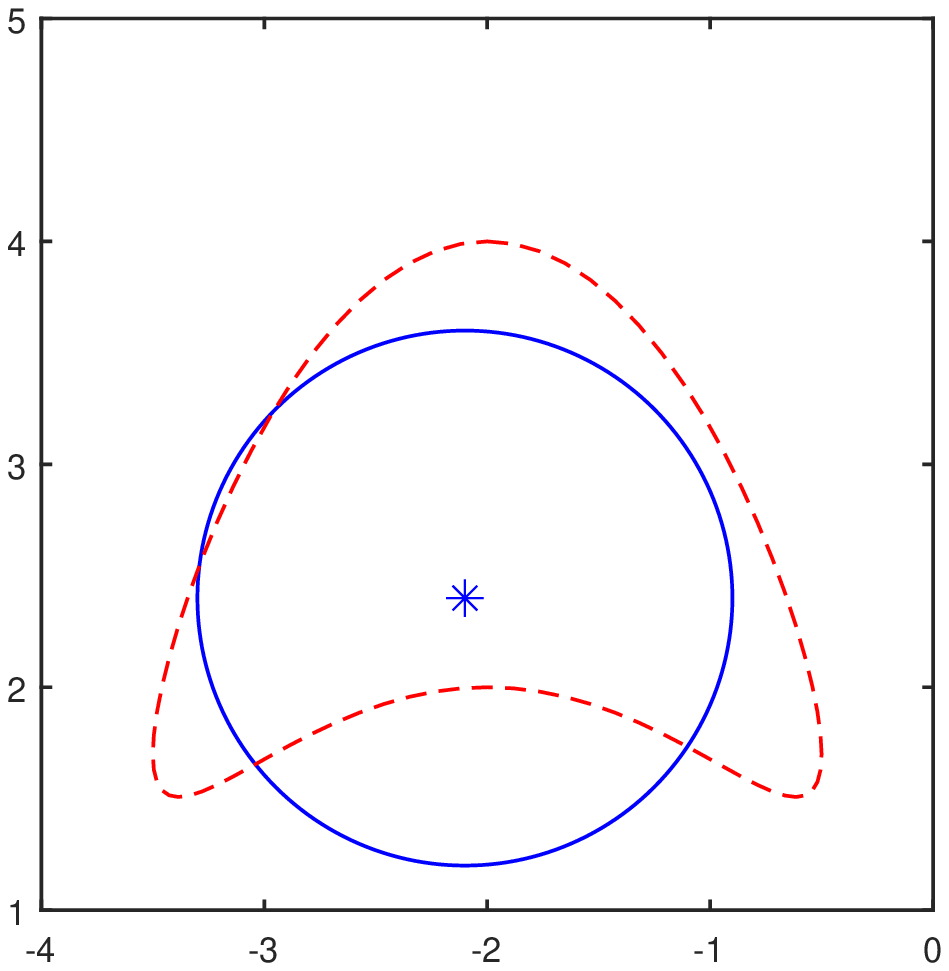}
\vspace{-0.9cm}

\end{minipage}
\end{minipage}
\caption{\label{fig4} The reconstruction using the multilevel ESM. Top left: the rigid pear using the compressional part of the far field pattern; Top middle: the rigid pear using the shear part; Top right: the cavity peanut using the compressional part; Bottom left: the cavity peanut using the shear part; Bottom middle: the impedance kite using the compressional part; Bottom right: the impedance kite using the shear part.}
\end{figure}

\subsection{Examples for IP-F}
We use the same $T$ defined in \eqref{mesh} and the Tikhonov regularization with $\alpha=10^{-5}$.
Equation \eqref{fareq2} leads to the following linear system
\begin{equation*}
\big(B_{\boldsymbol z}^*B_{\boldsymbol z}+\alpha I\big)\vec{\boldsymbol g}^\alpha_{\boldsymbol z}=B_{\boldsymbol z}^*{\boldsymbol f}_F,
\end{equation*}
where
\begin{equation*}
B_{\boldsymbol z}:={\left( \begin{array}{cc}
\bigg(\sqrt{\frac{k_p}{w}}u_p^{\infty,{\boldsymbol z}}(\hat{\boldsymbol x}_l;\hat{\boldsymbol x}_j,1,0)\bigg)_{52\times52} & \bigg(\sqrt{\frac{k_s}{w}}u_p^{\infty,{\boldsymbol z}}(\hat{\boldsymbol x}_l;\hat{\boldsymbol x}_j,0,1)\bigg)_{52\times52}\nonumber\\
\bigg(\sqrt{\frac{k_p}{w}}u_s^{\infty,{\boldsymbol z}}(\hat{\boldsymbol x}_l;\hat{\boldsymbol x}_j,1,0)\bigg)_{52\times52} & \bigg(\sqrt{\frac{k_s}{w}}u_s^{\infty,z}(\hat{\boldsymbol x}_l;\hat{\boldsymbol x}_j,0,1)\bigg)_{52\times52}
\end{array}
\right )}.
\end{equation*}
According to the definition of the inner product \eqref{innerproduct}, we have
\begin{equation*}
B_{\boldsymbol z}^*=D_{ps}^{-1}\overline{B_{\boldsymbol z}}^T D_{ps}
\end{equation*}
with the diagonal matrix $D_{ps}$ given by
\begin{equation*}
D_{ps}:={\left( \begin{array}{cc}
(k_s/k_p)I_{52\times 52}& 0\\
0 & I_{52\times 52}
\end{array}
\right ).}
\end{equation*}
Then the regularized solution is given by
\begin{equation*}
\vec{\boldsymbol g}^\alpha_{\boldsymbol z}= \big(B_{\boldsymbol z}^\ast B_{\boldsymbol z}+\alpha I\big)^{-1}B_{\boldsymbol z}^\ast {\boldsymbol f}_F.
\end{equation*}
We plot the contours for the indicator function
\begin{equation}\label{Iz}
 I_{\boldsymbol z} = \frac{\|\vec{\boldsymbol g}^\alpha_{\boldsymbol z}\|_{l^2}}{\max\limits_{{\boldsymbol z}\in T}\|\vec{\boldsymbol g}^\alpha_{\boldsymbol z}\|_{l^2}}
\end{equation}
for all the sampling points ${\boldsymbol z}\in T$.

\autoref{fig5} shows the contour plots of $I_{\boldsymbol z}$ for the rigid pear and the reconstruction result.
\autoref{fig6} and \autoref{fig7} show the reconstructions for the cavity peanut and impedance kite, respectively.

Again, we use the multilevel ESM starting with a large sampling disk ($R=2.4$).
For the rigid pear, the cavity peanut and impedance kite ($\sigma=2$), the radius of the sampling disks are all found to be $R=0.6$. \autoref{fig8} shows the reconstructions of the multilevel ESM for the pear, the peanut, and the kite.

\begin{figure}[h!]
\begin{minipage}[t]{\linewidth}
\begin{minipage}[t]{0.40\linewidth}
\includegraphics[angle=0, width=\textwidth]{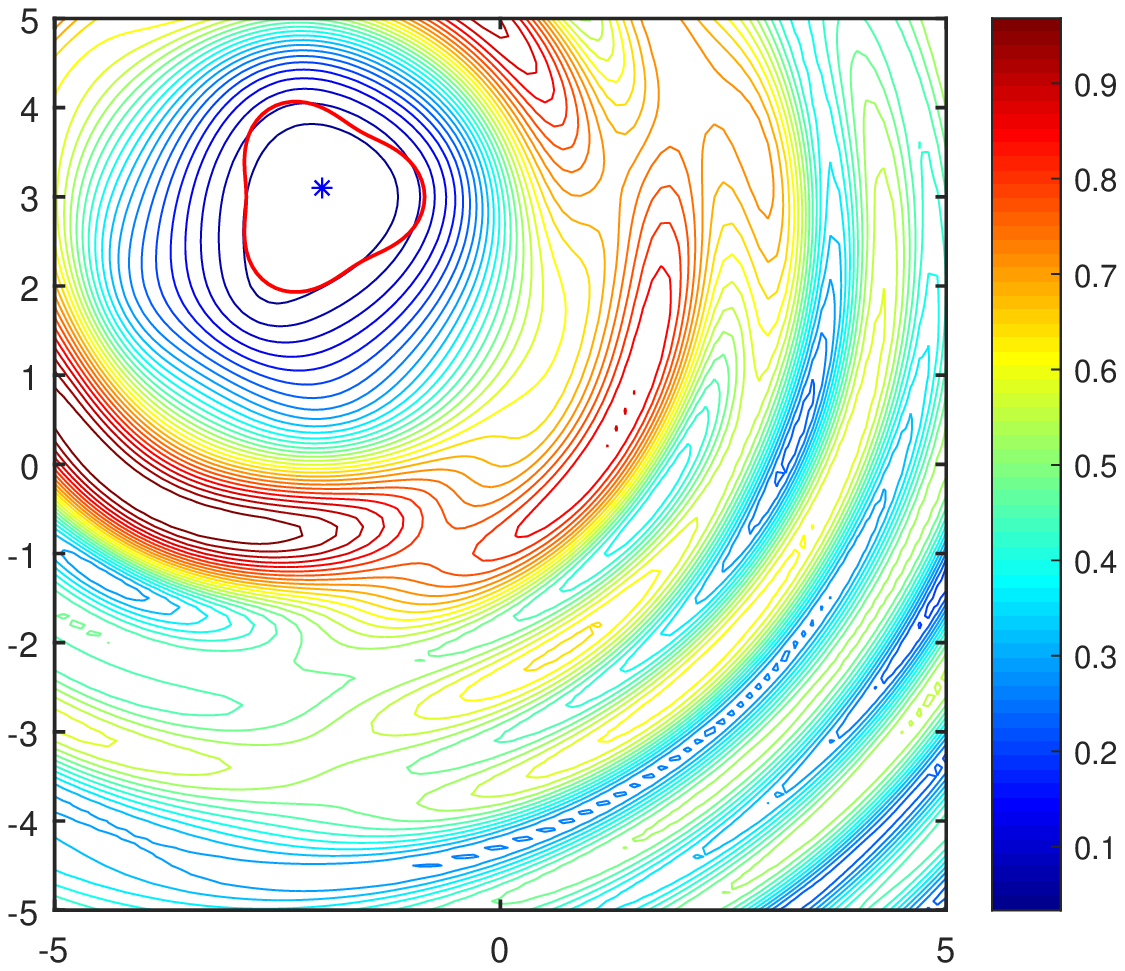}
\vspace{-0.9cm}

\end{minipage}
\hspace{0.4cm}
\begin{minipage}[t]{0.40\linewidth}
\includegraphics[angle=0, width=0.83\textwidth]{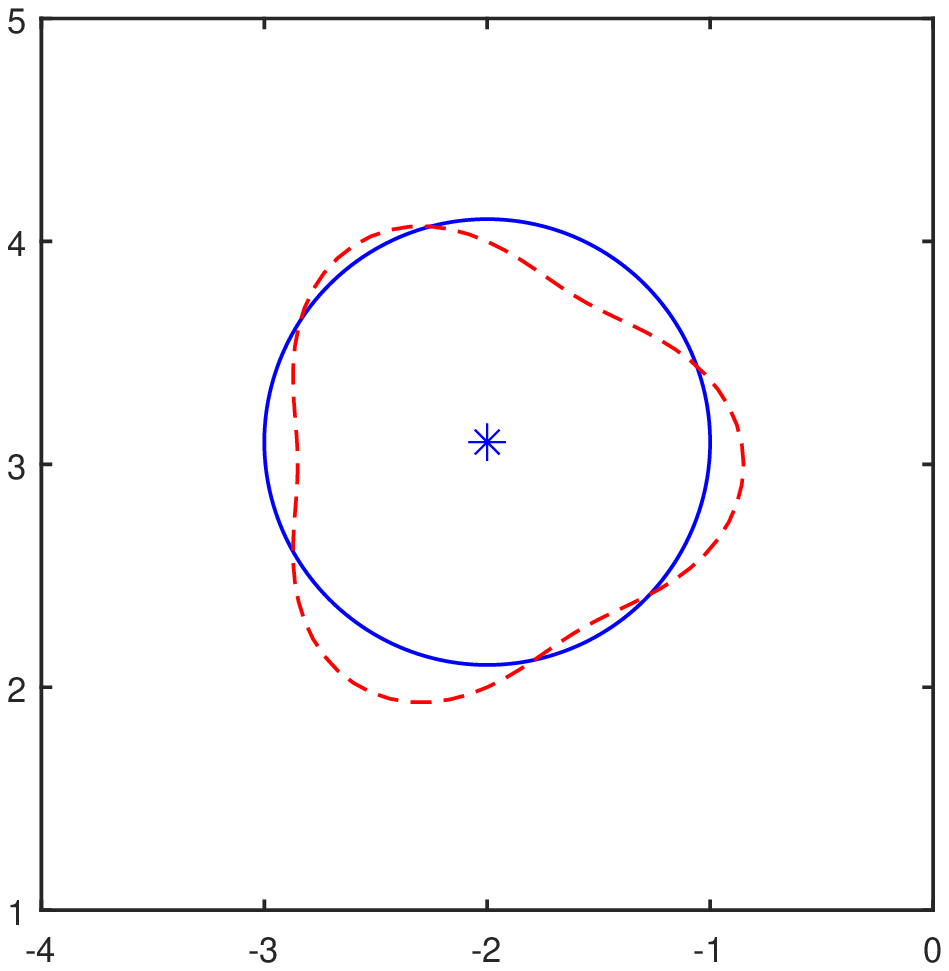}
\vspace{-0.4cm}

\end{minipage}
\end{minipage}
\caption{\label{fig5} Reconstructions of the rigid pear using the far field pattern: Left: contour plot of $I_z$; Right: reconstruction.}
\end{figure}

\begin{figure}[h!]
\begin{minipage}[t]{\linewidth}
\begin{minipage}[t]{0.40\linewidth}
\includegraphics[angle=0, width=\textwidth]{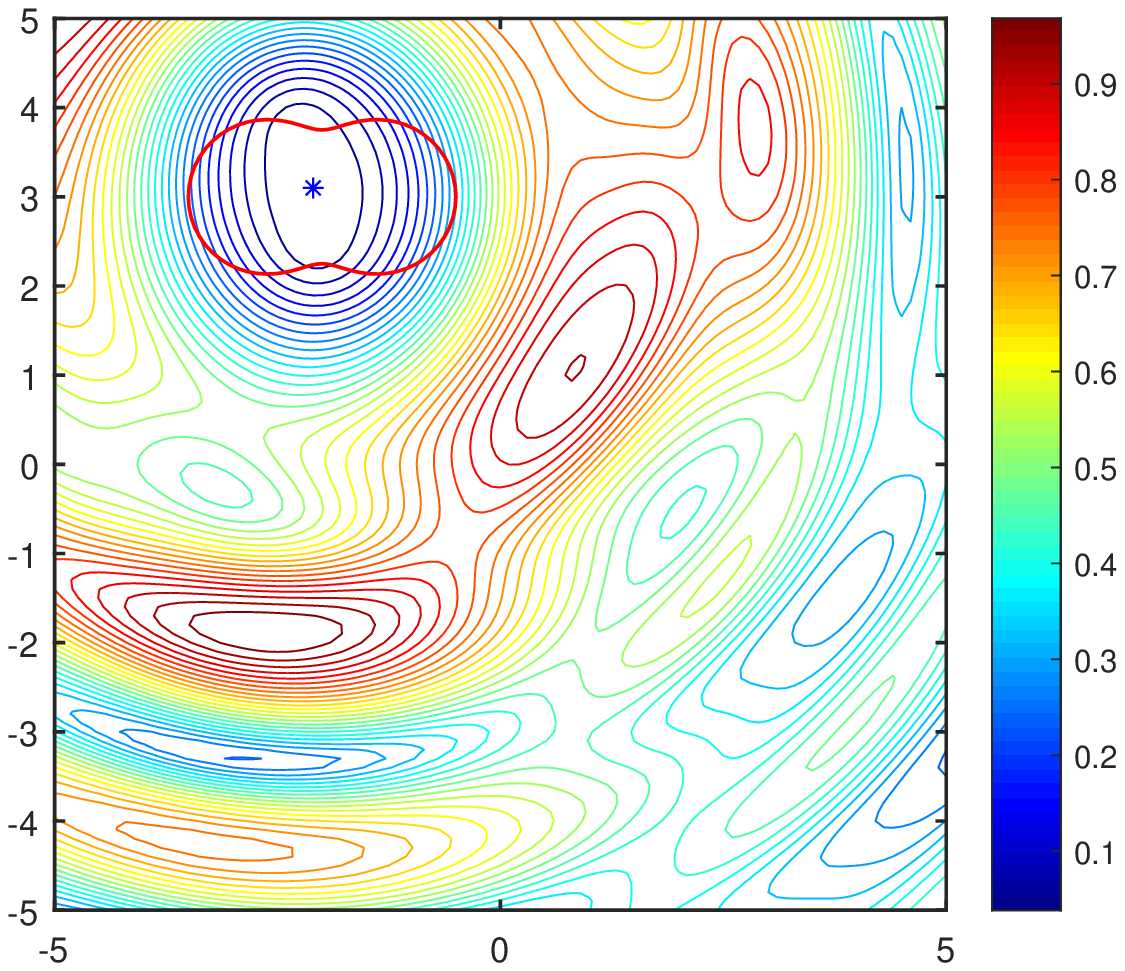}
\vspace{-0.9cm}

\end{minipage}
\hspace{0.4cm}
\begin{minipage}[t]{0.40\linewidth}
\includegraphics[angle=0, width=0.83\textwidth]{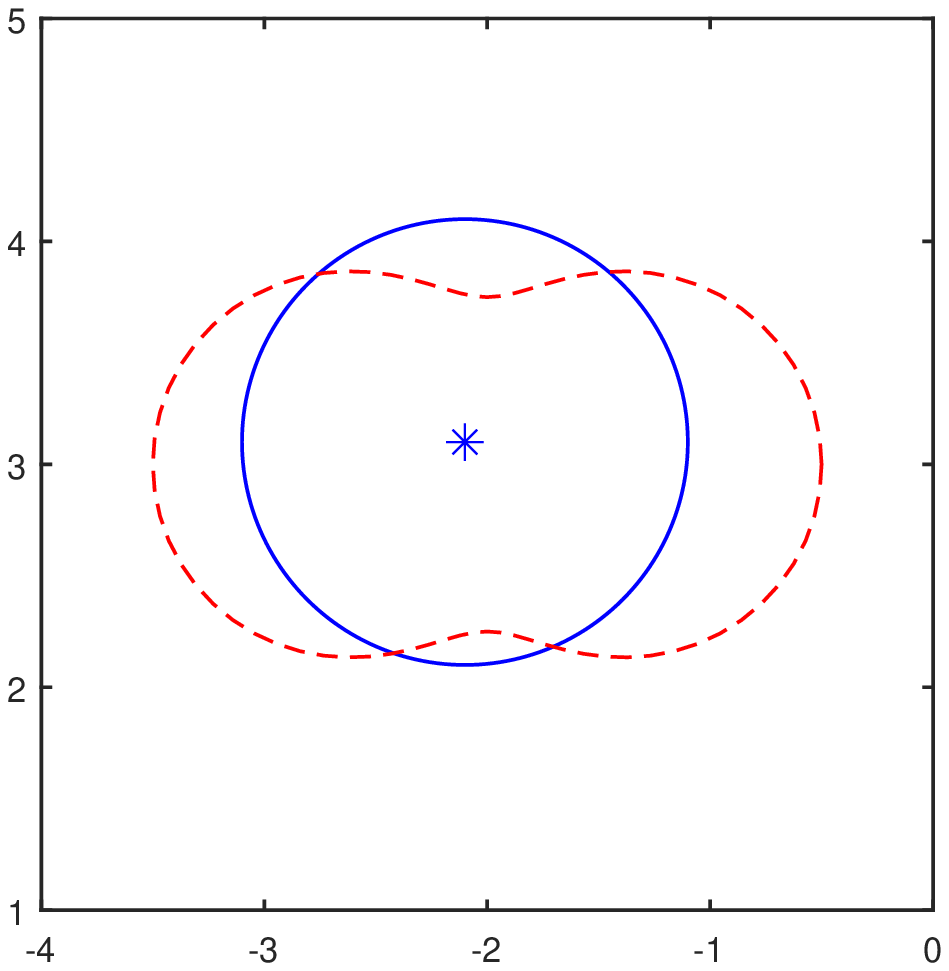}
\vspace{-0.4cm}

\end{minipage}
\end{minipage}
\caption{\label{fig6} Reconstructions of the cavity peanut using the far field pattern. Left: contour plot of $I_z$; Right: reconstruction.}
\end{figure}

\begin{figure}[h!]
\begin{minipage}[t]{\linewidth}
\begin{minipage}[t]{0.40\linewidth}
\includegraphics[angle=0, width=\textwidth]{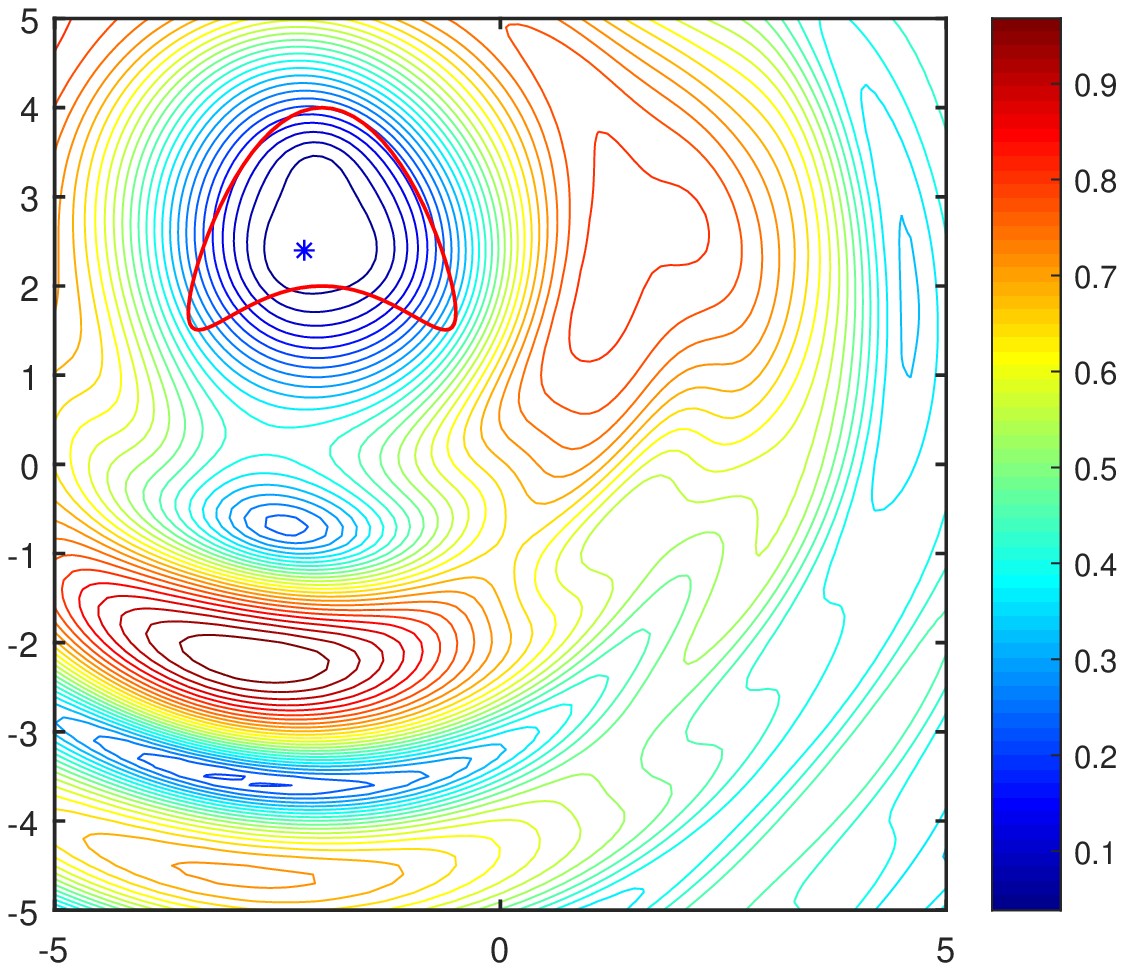}
\vspace{-0.9cm}

\end{minipage}
\hspace{0.4cm}
\begin{minipage}[t]{0.40\linewidth}
\includegraphics[angle=0, width=0.83\textwidth]{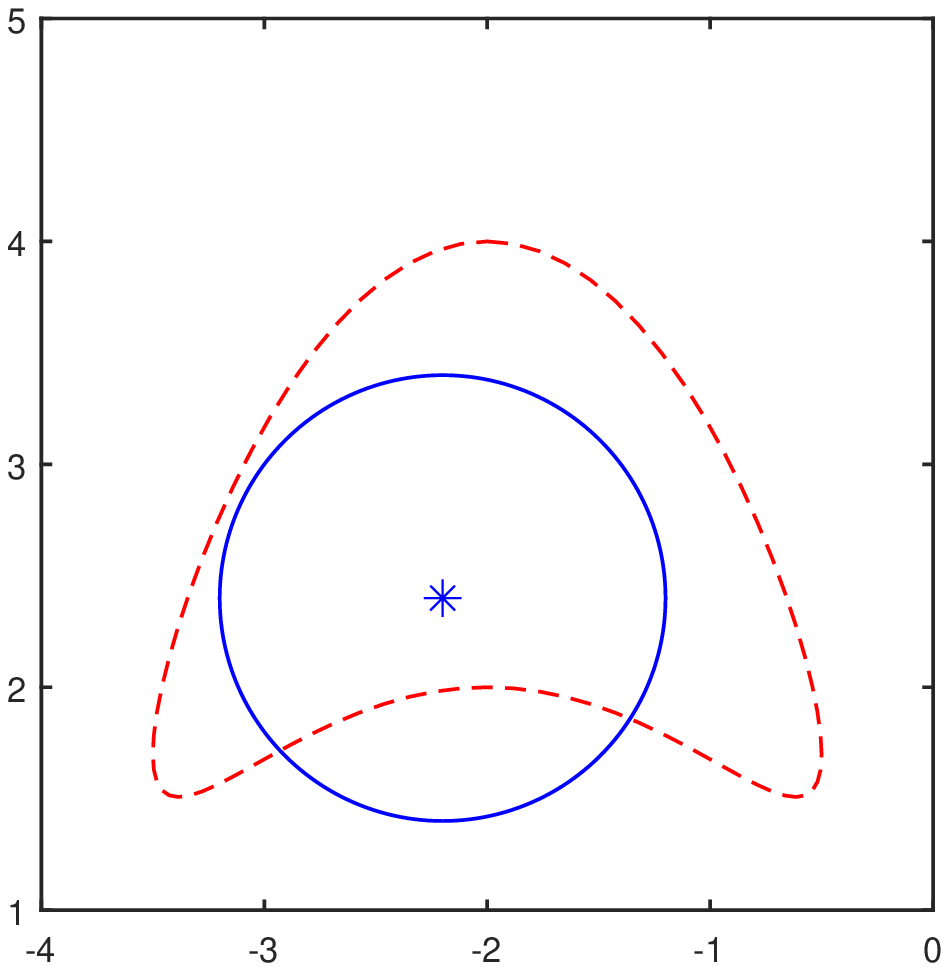}
\vspace{-0.4cm}

\end{minipage}
\end{minipage}
\caption{\label{fig7} Reconstructions of the impedance kite using the far field pattern. Left: contour plot of $I_z$; Right: reconstruction.}
\end{figure}

\begin{figure}[h!]
\begin{minipage}[t]{\linewidth}
\begin{minipage}[t]{0.30\linewidth}
\includegraphics[angle=0, width=\textwidth]{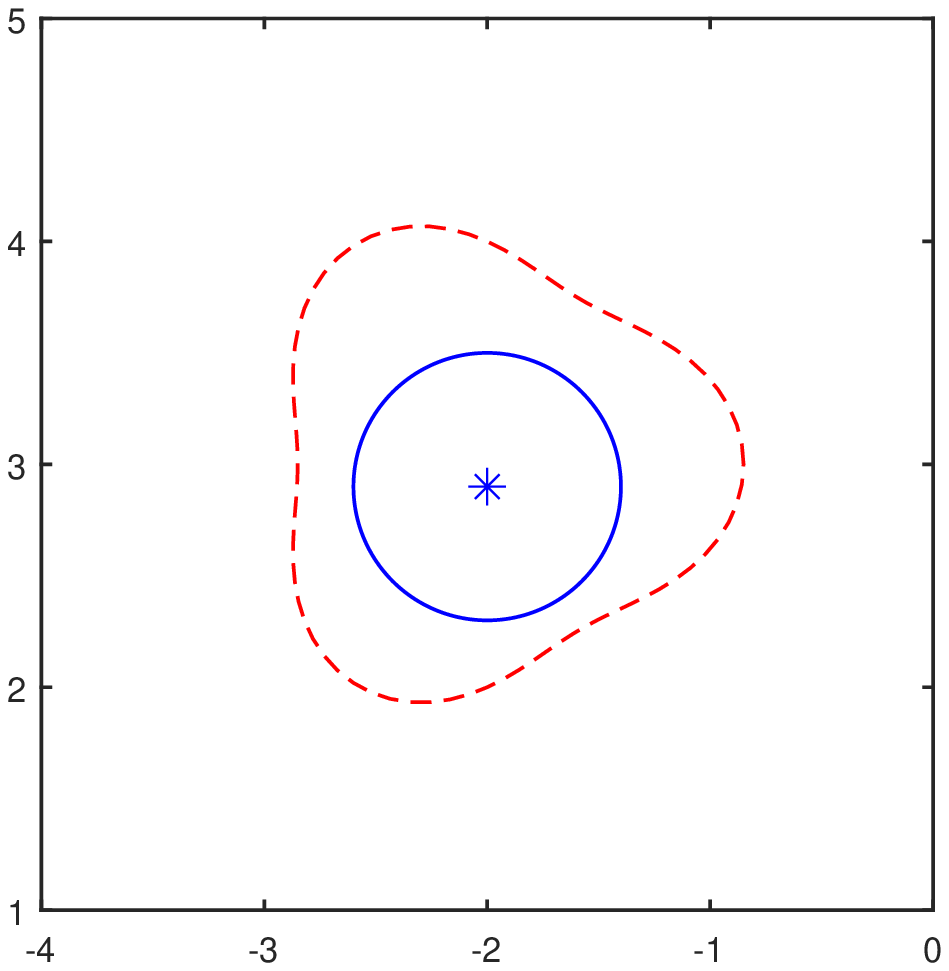}
\vspace{-0.9cm}

\vspace{0.1cm}
\end{minipage}
\hspace{0.2cm}
\begin{minipage}[t]{0.30\linewidth}
\includegraphics[angle=0, width=\textwidth]{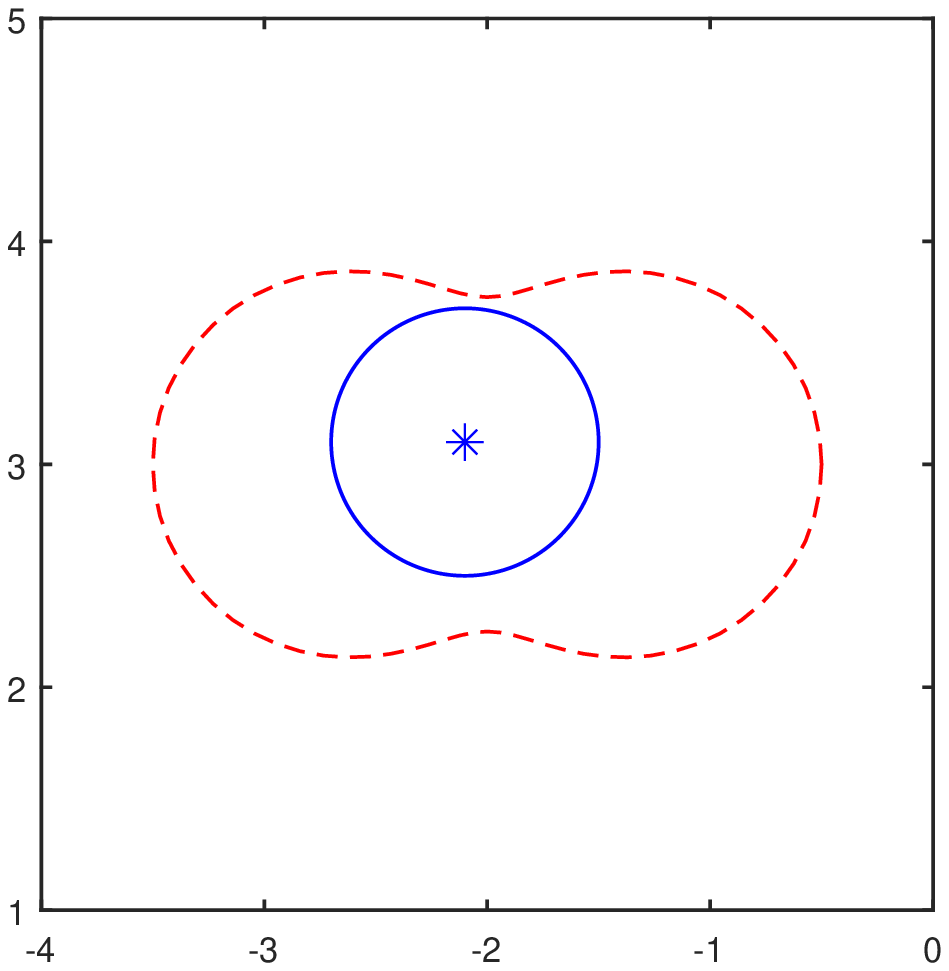}
\vspace{-0.9cm}

\vspace{0.1cm}
\end{minipage}
\hspace{0.2cm}
\begin{minipage}[t]{0.30\linewidth}
\includegraphics[angle=0, width=\textwidth]{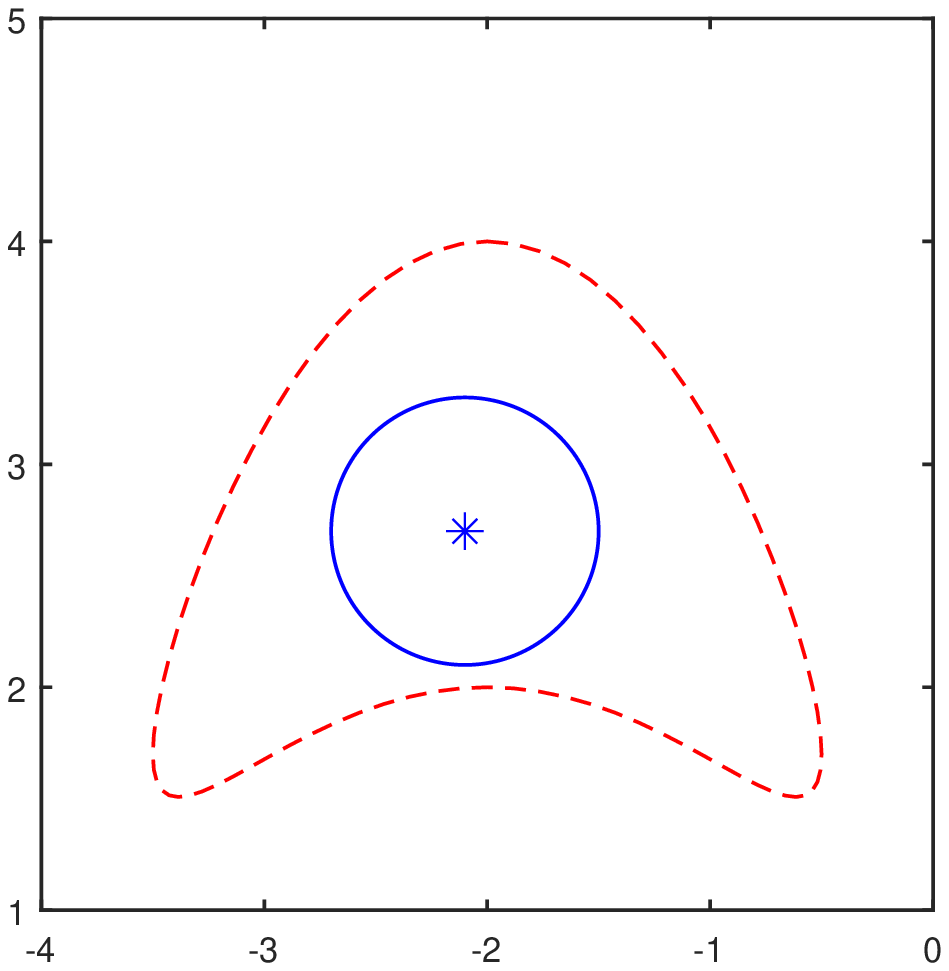}
\vspace{-0.9cm}

\vspace{0.1cm}
\end{minipage}
\end{minipage}
\caption{\label{fig8} The reconstruction results of the multilevel ESM. Left: the rigid pear obstacle; Middle: the cavity peanut obstacle; Right: the impedance kite obstacle.}
\end{figure}

\end{document}